\newtheorem{theorem}{Theorem}[section]
\newtheorem{lemma}[theorem]{Lemma}
\newtheorem{proposition}[theorem]{Proposition}
\theoremstyle{definition}
\newtheorem{definition}[theorem]{Definition}
\newtheorem{definition-lemma}[theorem]{Definition-Lemma}
\newtheorem{example}[theorem]{Example}
\theoremstyle{remark}
\newtheorem{remark}[theorem]{Remark}
\numberwithin{equation}{section}
\numberwithin{figure}{section}
\newcommand{\bbfamily}{\fontencoding{U}\fontfamily{bbold}\selectfont}
\newcommand{\textbb}[1]{{\bbfamily#1}}
\newcommand {\lfor} {\mbox{\textbb{[}}}
\newcommand {\rfor} {\mbox{\textbb{]}}}
\newcommand{\ZZ} {\mathbb{Z}}
\newcommand{\QQ} {\mathbb{Q}}
\newcommand{\RR} {\mathbb{R}}
\newcommand{\bR} {\RR}
\newcommand{\CC} {\mathbb{C}}
\newcommand{\PP} {\mathbb{P}}
\newcommand{\VV} {\mathbb{V}}
\renewcommand{\AA} {\mathbb{A}}
\newcommand{\GG} {\mathbb{G}}
\newcommand {\shM}  {\mathcal{M}}
\newcommand {\shO}  {\mathcal{O}}
\newcommand {\foD}  {\mathfrak{D}}
\newcommand {\foX}  {\mathfrak{X}}
\newcommand {\fod}  {\mathfrak{d}}
\newcommand {\fom}  {\mathfrak{m}}
\newcommand {\can}  {\mathrm{can}}
\newcommand {\GL}  {\operatorname{GL}}
\newcommand {\gp}  {{\operatorname{gp}}}
\newcommand {\id}  {\operatorname{id}}
\newcommand {\Int}  {\operatorname{Int}}
\newcommand {\kk} {\Bbbk}
\newcommand {\Mono} {\operatorname{Mono}}
\newcommand {\out}  {\mathrm{out}}
\newcommand {\PGL}  {\operatorname{PGL}}
\newcommand {\SL}  {\operatorname{SL}}
\newcommand {\Spec} {\operatorname{Spec}}
\newcommand {\Spf}  {\operatorname{Spf}}
\newcommand {\NE}   {\operatorname{NE}}
\def\bP{\Bbb P}
\def\SL{\operatorname{SL}}
\def\GL{\operatorname{GL}}
\def\bZ{\Bbb Z}
\def\bR{\Bbb R}
\def\oD{\bar{D}}
\def\tS{\tilde S}
\def\oS{\overline{S}}
\def\tS{\tilde{S}}
\def\oN{\overline{N}}
\def\Spec{\operatorname{Spec}}
\def\PGL{\operatorname{PGL}}
\def\Spec{\operatorname{Spec}}
\def\Efi#1#2#3#4#5{\displaystyle
#1\!\!-\!\!#2
\!\!-\!\!#3
\!\!-\!\!#4
\hskip-24.2pt\lower4.5pt\hbox{${\scriptstyle|}
\hskip-3.35pt\lower6pt\hbox{$#5$}$}}
\def\Evia#1#2#3#4#5{\displaystyle
#1\!\!-\!\!#2
\!\!-\!\!#3
\hskip-24.2pt\lower4.5pt\hbox{${\scriptstyle|}
\hskip-3.35pt\lower6pt\hbox{$#4\!\!-\!\!\!-\!\!\!-\!\!$}$\hskip2.3pt${\scriptstyle|}
\hskip-3.35pt\lower6pt\hbox{$#5$}$}}
\def\Ezia#1#2#3#4{\displaystyle
#1\!\!-\!\!#2
\hskip-14.8pt\lower4.5pt\hbox{${\scriptstyle|}
\hskip-3.35pt\lower6pt\hbox{$#3\!\!-\!\!$}$\hskip2.3pt${\scriptstyle|}
\hskip-3.35pt\lower6pt\hbox{$#4$}$}}
\def\Efia#1#2#3#4#5#6{\displaystyle
#1\!\!-\!\!#2
\!\!-\!\!#3
\!\!-\!\!#4
\hskip-24.2pt\lower4.5pt\hbox{${\scriptstyle|}
\hskip-3.35pt\lower6pt\hbox{$#5$}$\hskip5.7pt${\scriptstyle|}
\hskip-3.35pt\lower6pt\hbox{$#6$}$}}
\def\Esi#1#2#3#4#5#6{\displaystyle
#1\!\!-\!\!#2
\!\!-\!\!#3
\!\!-\!\!#4\!\!-\!\!#5
\hskip-24.2pt\lower4.5pt\hbox{${\scriptstyle|}
\hskip-3.35pt\lower6pt\hbox{$#6$
\lower3pt\hbox{\ }}$}}
\def\Esia#1#2#3#4#5#6#7{\displaystyle

#1\!\!-\!\!#2
\!\!-\!\!#3
\!\!-\!\!#4\!\!-\!\!#5
\hskip-24.2pt\lower4.5pt\hbox{${\scriptstyle|}
\hskip-3.35pt\lower6pt\hbox{$#6$\hskip-3.8pt\lower4.5pt\hbox{${\scriptstyle|}
\hskip-3.35pt\lower6pt\hbox{$#7$}$}}
\lower3pt\hbox{\ }$}}
\def\Ese#1#2#3#4#5#6#7{\displaystyle
#1\!\!-\!\!#2
\!\!-\!\!#3
\!\!-\!\!#4\!\!-\!\!#5\!\!-\!\!#6
\hskip-33.6pt\lower4.5pt\hbox{${\scriptstyle|}
\hskip-3.35pt\lower6pt\hbox{$#7$
\lower3pt\hbox{\ }
}$}}
\def\Esea#1#2#3#4#5#6#7#8{\displaystyle
#1\!\!-\!\!#2
\!\!-\!\!#3
\!\!-\!\!#4\!\!-\!\!#5\!\!-\!\!#6\!\!-\!\!#7
\hskip-33.6pt\lower4.5pt\hbox{${\scriptstyle|}
\hskip-3.35pt\lower6pt\hbox{$#8$
\lower3pt\hbox{\ }
}$}}
\def\Eei#1#2#3#4#5#6#7#8{\displaystyle
#1\!\!-\!\!#2
\!\!-\!\!#3
\!\!-\!\!#4\!\!-\!\!#5\!\!-\!\!#6\!\!-\!\!#7
\hskip-43.2pt\lower4.5pt\hbox{${\scriptstyle|}
\hskip-3.35pt\lower6pt\hbox{$#8$
\lower3pt\hbox{\ }
}$}}
\def\Eeia#1#2#3#4#5#6#7#8#9{{\displaystyle
#1\!\!-\!\!#2
\!\!-\!\!#3
\!\!-\!\!#4\!\!-\!\!#5\!\!-\!\!#6\!\!-\!\!#7\!\!-\!\!#8
\hskip-52.2pt\lower4.5pt\hbox{${\scriptstyle|}
\hskip-3.35pt\lower6pt\hbox{$#9$
\lower3pt\hbox{\ }
}$}}}
\def\os{\overline{S}}
\def\tS{\tilde{S}}
\def\ts{\tS}
\def\ts7{\tilde{S}_7}
\def\bP{\Bbb P}
\def\bZ{\Bbb Z}
\def\bR{\Bbb R}
\def\tS{\tilde S}
\def\oS{\overline{S}}
\def\tS{\tilde{S}}
\def\oN{\overline{N}}
\def\Spec{\operatorname{Spec}}
\def\PGL{\operatorname{PGL}}
\def\oN{\overline{N}}
\def\os7p{\oS_7'}
\def\os7{\oS_7}
\def\on6{\oN_6}
\def\n6{\oN_6}
\def\Mono{\operatorname{Mono}}
\def\brg0{(\bR_{\geq 0})}
\def\mydate{\ifcase\month \or January\or February\or March\or
April\or May\or June\or July\or August\or September\or October\or 
November\or December\fi \space\number\day,\space\number\year}
\begin{document}

\title[The mirror of the cubic surface]
{The mirror of the cubic surface}
\dedicatory{To Miles Reid on the occasion of his $70^{th}$ birthday}
\author{Mark Gross} 
\address{DPMMS, University of Cambridge,
Wilberforce Road, Cambridge CB3 0WB, UK}
\email{mgross@dpmms.cam.ac.uk}
\author{Paul Hacking}
\address{Department of Mathematics and Statistics, Lederle Graduate
Research Tower, University of Massachusetts, Amherst, MA 01003-9305}
\email{hacking@math.umass.edu}

\author{Sean Keel}
\author{Bernd Siebert}
\address{Department of Mathematics, 1 University Station C1200, Austin,
TX 78712-0257}
\email{keel@math.utexas.edu, siebert@math.utexas.edu}
\maketitle
\tableofcontents
\bigskip

\section*{Introduction}

A number of years ago, one of us (M.G.) was giving a lecture at the
University of Warwick on the material on scattering diagrams from 
\cite{GPS}. Of course Miles was
in the audience, and he asked (paraphrasing as this was many years ago)
whether, at some point, the lecturer would come back down to earth.
The goal of this note is to show in fact we have not left the planet
by considering a particularly beautiful example of the mirror symmetry
construction of \cite{GHK11}, namely the mirror to a cubic surface.

More precisely, the paper \cite{GHK11}, building on \cite{GS11}, \cite{GPS} 
and \cite{CPS}, constructs mirrors of rational surfaces equipped with
anti-canonical cycles of rational curves. Specifically, one begins
with the data of a pair $(Y,D)$,
where $Y$ is a non-singular projective rational surface over an algebraically
closed field $\kk$ of characteristic $0$, and $D\in |-K_Y|$ is an effective
reduced anti-canonical divisor with at least one node, necessarily then
forming a wheel of projective lines. Choose in addition
a finitely generated, saturated sub-monoid $P\subset H_2(Y,\ZZ)$ 
whose only invertible element is $0$, such that $P$ contains the class
of every effective curve on $Y$. Let $\fom$ denote the maximal
monomial ideal of the monoid ring $\kk[P]$ and $\widehat{\kk[P]}$ denote
the completion of $\kk[P]$ with respect to $\fom$. Then the main
construction of \cite{GHK11} produces a family of formal schemes
$\foX\rightarrow \Spf \widehat{\kk[P]}$ which is interpreted as
the mirror family to the pair $(Y,D)$. In the more pleasant case when $D$
supports an ample divisor, the construction is in fact algebraic:
there is a family $X\rightarrow S:=\Spec \kk[P]$ of affine surfaces extending
the above formal family. In general, if $D$ has $n\ge 3$ components,
then $X$ is a closed subscheme of $\AA^n_S$, with central fibre a reducible
union of $n$ copies of $\AA^2$.

\cite{GHK11}, Example 6.13, contains the 
equation\footnote{Unfortunately with a sign error!} for $X$
in the case that $Y$ is a cubic surface in $\PP^3$ and $D=D_1+D_2+D_3$ 
is a triangle of lines. The intent was to include a proof of
this in \cite{GHKII},
which, at the time, was circulated rather narrowly in an extreme rough
draft form. 

As \cite{GHKII} has seen no change for more than five years, and many
pieces of it have been cannibalized for other papers or become out-of-date,
it seemed that, in the grand tradition of second parts of
papers, this paper is unlikely to ever see the light of day.
On the other hand, the full details of the cubic surface have not appeared 
anywhere else, although Lawrence Barrott \cite{B18} verifies the 
given equation for the mirror of the cubic surface.
The cubic surface is in particular especially attractive.
This is unsurprising, given the rich classical geometry of the cubic
(see e.g., \cite{Reid}).
So we felt that it would be a pity for this construction 
never to appear. Further, since \cite{GHKII} first began to circulate, 
the technology for understanding the product rule for theta functions
on the mirror, and hence the equations for the mirror, has improved
at a theoretical level, see \cite{GS18},\cite{GS19},\cite{KY19}.
Thus in particular it will be possible to give
a completely enumerative interpretation for the equations to the mirror
cubic. This gives us an opportunity to exposit a number of different
viewpoints on the construction here.

Without further ado, here is the main result. Describe the pair
$(Y,D)$ as follows. First fix the pair $(\PP^2, \oD=\oD_1+\oD_2+\oD_3)$
where $\oD$ is a triangle of lines. Let $(Y,D)$ be obtained as
the blow-up of two general distinct points on each of the three lines,
with $D$ the strict transform of $\oD$. Let $E_{ij}$, $i=1,2,3$, $j=1,2$
be the exceptional curves, with $E_{ij}$ intersecting $D_i$. For $i=1,2$ or
$3$, denote by $L_{ij}$, $1\le j \le 8$, the eight lines on
the cubic surface not contained in $D$ but intersecting $D_i$. We note
that $\{E_{i1},E_{i2}\}\subseteq \{L_{ij}\,|\,1\le j \le 8\}$.

\begin{theorem}
\label{thm:main theorem}
Taking $P=\NE(Y)$, the cone of effective curves of $Y$, $S=\Spec \kk[P]$,
the mirror family defined over $S$ to the cubic surface $(Y,D=D_1+D_2+D_3)$ 
is given by the
equation in $\AA^3_{S}$:
\[
\vartheta_1\vartheta_2\vartheta_3= \sum_i z^{D_i} \vartheta_i^2 +
\sum_i \left(\sum_j z^{L_{ij}}\right)z^{D_i} \vartheta_i +
\sum_{\pi} z^{\pi^*H} + 4z^{D_1+D_2+D_3}.
\]
Here for a curve class $C$, $z^C$ denotes the corresponding monomial
of $\kk[P]$, and $\vartheta_1,\vartheta_2,\vartheta_3$ are the coordinates
on the affine $3$-space.
The sum over
$\pi$ is the sum over all possible birational morphisms 
$\pi \colon Y \rightarrow Y'$
of $(Y,D)$ to a pair $(Y',D')$  isomorphic to $\bP^2$ with its toric boundary,
with $\pi|_{D}:D\rightarrow D'$ an isomorphism and $H$ the class of a line
in $\PP^2$.
\end{theorem}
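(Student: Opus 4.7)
The plan is to compute the product $\vartheta_1\vartheta_2\vartheta_3$ in the coordinate ring of the mirror family $X\to S$ via the broken line formalism of \cite{GHK11}, and to match the result term by term with the right-hand side of the stated equation.

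\emph{Setup.} The pair $(Y,D)$ determines the integral affine manifold with singularity $B=B(Y,D)$ of \cite{GHK11}: topologically $\RR^2$, with three unbounded rays $\rho_1,\rho_2,\rho_3$ subdividing $B$ into three chambers, where $\rho_i$ corresponds to the component $D_i$, and the theta function $\vartheta_i$ is indexed by the primitive integer point $v_i\in\rho_i$. I will build the canonical scattering diagram $\foD$ on $B$ as in \cite{GHK11}; because $(Y,D)$ admits a toric model $(Y,D)\to(\PP^2,\oD)$ given by contracting the six $E_{ij}$'s, $\foD$ may be computed from the trivial scattering diagram on $(\PP^2,\oD)$ perturbed by the six initial walls introduced by the exceptional curves, propagated by the Kontsevich--Soibelman / Gross--Pandharipande--Siebert algorithm of \cite{GPS}. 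Multiplication in the ring of global functions on $X$ is then defined, via the canonical $\kk[P]$-module basis $\{\vartheta_p\}_{p\in B(\ZZ)}$, by summing over tuples of broken lines with prescribed asymptotic directions ending at a common generic point $Q\in B$.

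\emph{Computation and organization.} Fix a generic $Q$ in one chamber. Each monomial of $\vartheta_1\vartheta_2\vartheta_3$ arises from a triple $(b_1,b_2,b_3)$ of broken lines with asymptotic directions $v_1,v_2,v_3$ and common endpoint $Q$, contributing $a(b_1)a(b_2)a(b_3)\,z^{m(b_1)+m(b_2)+m(b_3)}$ where $a(b_i)$ and $m(b_i)$ are the final monomial coefficient and ending direction of $b_i$. A balancing/degree argument will show that the endpoint sum $m:=m(b_1)+m(b_2)+m(b_3)\in B(\ZZ)$ is constrained to the finite set $\{2v_i\}_{i=1}^3 \cup \{v_i\}_{i=1}^3 \cup \{0\}$, which, after re-expressing $\vartheta_{2v_i}=\vartheta_i^2+\text{lower theta functions}$ (the lower terms from pair broken-line counts), yields a polynomial in $\vartheta_1,\vartheta_2,\vartheta_3$ of degree three. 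I will then identify the coefficients as follows. (a) For the $\vartheta_i^2$ monomial I expect a contribution from configurations where two of the $b_j$'s pass straight through and the third bends once on $\rho_i$, attaching $z^{D_i}$ by the construction of $\foD$. (b) For the $\vartheta_i$ monomial the relevant configurations have two bends, and their enumeration, via the enumerative theorem for theta functions of \cite{GS18},\cite{GS19},\cite{KY19}, counts $\bA^1$-classes on $(Y,D)$ meeting $D_i$ once with prescribed incidence; these are the eight lines $L_{ij}$, each contributing $z^{D_i+L_{ij}}$. (c) For the constant term, closed broken-loop configurations split into contributions pulled back from toric models $\pi\colon Y\to(\PP^2,\oD)$, giving $\sum_\pi z^{\pi^*H}$, together with a central contribution in the anticanonical class $D_1+D_2+D_3$.

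\emph{The main obstacle} I expect is the coefficient $4$ in the $z^{D_1+D_2+D_3}$ term of (c). I plan to handle this by a direct enumeration of the finitely many central broken-loop configurations contributing to class $-K_Y$: the relevant walls of $\foD$ near the origin of $B$ are explicitly determined by the toric-model computation and by finitely many perturbations coming from the six initial walls. Equivalently, invoking the enumerative theorem again, the coefficient should equal the log Gromov--Witten count on $(Y,D)$ of anticanonical rational curves with three marked points mapping to the three components of $D$ with primitive tangency; this count evaluates to $4$ and matches the independent scattering-diagram verification of Barrott \cite{B18}. Assembling (a)--(c) then yields the stated equation.
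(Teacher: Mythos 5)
Your overall strategy (broken lines, a degree bound restricting the output classes, and an enumerative reading of the structure constants) is the right one, and it differs from the paper's route in one organizational respect: the paper never computes the triple product directly from triples of broken lines, but instead iterates the two-factor product rule (Theorem \ref{multrule}), computing $\vartheta_{v_1}\vartheta_{v_2}$ first and then $\vartheta_{v_1+v_2}\cdot\vartheta_{v_3}$, $\vartheta_{v_i}^2$, etc.\ (Lemma \ref{lem:products}), with the search bounded by the min-convex function $F=\langle K_Y,\cdot\rangle$. That iteration is what makes the computation finite and checkable by hand. Your plan to first ``build'' the full canonical scattering diagram from the toric model via the GPS algorithm is not executable as stated: every rational ray carries a nontrivial wall function (Theorem \ref{thm:scat}), and the paper only pins these down via the direct enumerative computation of Proposition \ref{prop:first ray} together with the $\SL_2(\ZZ)$ symmetry generated by $S$ and $T$; in particular the explicit function \eqref{eq:v1v2ray} on $\RR_{\ge 0}(v_1+v_2)$ is indispensable for the $\sum_j z^{D_3+L_{3j}}$ and constant terms, and you give no mechanism for obtaining it.

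The genuine gap is in your treatment of the coefficient $4$. You propose to identify it with ``the log Gromov--Witten count of anticanonical rational curves with three marked points mapping to the three components of $D$ with primitive tangency.'' First, that count is not well-posed: $|-K_Y|$ is three-dimensional, so without imposing that a fourth marked point $x_{\out}$ maps to a fixed general $z\in Y$ \emph{and} fixing the modulus (cross-ratio) of the four-pointed domain in $\overline{\shM}_{0,4}$, the moduli space is positive-dimensional. Second, once correctly posed, the count is $10$, not $4$ (eight irreducible nodal anticanonical cubics plus two reducible line-plus-conic configurations); the coefficient $4$ in the theorem only appears after converting the natural output of the broken-line computation, which is written in the basis $\{\vartheta_{2v_i}\}$, into the monomials $\vartheta_i^2$ via $\vartheta_{2v_i}=\vartheta_{v_i}^2-2z^{D_j+D_k}$, each substitution subtracting $2z^{D_1+D_2+D_3}$. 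You flag this basis change earlier in your outline but then drop it precisely where it matters, so carrying out your plan literally would produce the wrong constant term. Finally, citing Barrott's verification of the answer cannot substitute for this step in a proof.
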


The original guess for the shape of these equations was motivated by
the paper \cite{Ob04}, which gave a similar equation for a non-commutative
cubic surface. Once one knows the shape of the equation, it is not
difficult to verify it, as we shall see.

Finally, we note that this paper does not intend to be 
a complete exposition of the ideas of \cite{GHK11}, but rather, we
move quickly to discuss the cubic surface. For a more comprehensive
expository account, see the forthcoming work of Arg\"uz \cite{Ar}.

\bigskip

\emph{Acknowledgements:} We would like to thank L.\ Barrott, 
A.\ Neitzke, A.\ 
Oblomkov and Y.\ Zhang for useful discussions. M.G.\ was supported by
EPSRC grant EP/N03189X/1 and a Royal Society Wolfson Research
Merit Award. P.H.\ was supported by NSF grant DMS-1601065 and
DMS-1901970. S.K.\ was
supported by NSF grant DMS-1561632.

\section{The tropicalization of the cubic surface}

We explain the basic combinatorial data we associate to the pair
$(Y,D)$, namely a pair $(B,\Sigma)$ where:
\begin{itemize}
\item $B$ is an \emph{integral linear manifold with singularities};
\item $\Sigma$ is a decomposition of $B$ into cones.
\end{itemize}
First, an integral linear manifold $B$ is a real manifold with coordinate
charts $\psi_i:U_i\rightarrow\RR^n$ (where $\{U_i\}$ is an open covering
of $B$) and transition maps $\psi_i\circ \psi_j^{-1}\in
\GL_n(\ZZ)$. An \emph{integral linear manifold with singularities}
is a manifold $B$ with an open set $B_0\subseteq B$ and 
$\Delta=B\setminus B_0$ of codimension at least $2$ such that $B_0$
carries an integral linear structure.

We build $B$ and $\Sigma$ by pretending that the pair $(Y,D)$ is
a toric variety. If it were, we could reconstruct its fan in
$\RR^2$ (up to $\GL_2(\ZZ)$) knowing the intersection numbers of
the irreducible components $D_i$ of $D$. So we just start
constructing a fan and we will run into trouble when $(Y,D)$ isn't a
toric variety. This problem is fixed by introducing a singularity in the linear
structure of $\RR^2$ at the origin.

Explicitly, for the cubic surface, $D_i^2=-1$ for $1\le i \le 3$, and we proceed
as follows. Take rays in $\RR^2$ corresponding to $D_1$ and $D_2$ to
be $\rho_1:=\RR_{\ge 0} (1,0)$ and $\rho_2:=\RR_{\ge 0}(0,1)$ respectively.
See the left-hand picture in Figure \ref{Figure1}.

\begin{figure}
\input{Figure1.pstex_t}
\caption{}
\label{Figure1}
\end{figure}

Since $D_2^2=-1$, toric geometry instructs us that the ray corresponding to
$D_3$ would be $\rho_3:=\RR_{\ge 0}(-1,1)$ if $(Y,D)$ were a toric pair.
Indeed, if $\rho_1,\rho_2,\rho_3$ are successive rays in a two-dimensional
fan defining a non-singular complete toric surface, and if $n_i$ is
the primitive generator of $\rho_i$ and $D_i$ is the divisor
corresponding to $\rho_i$, we have the relation
\[
n_1+D_2^2 n_2 + n_3=0.
\]
Thus with $n_1=(1,0)$ and $n_2=(0,1)$, $n_3$ is determined by 
$D_2^2$. As
$D_3^2=-1$, we then need a ray corresponding to $D_1$ to be $\RR_{\ge 0}(-1,0)$,
which does not coincide with the ray $\rho_1$ (telling us that $(Y,D)$
wasn't really a toric pair). If we continue, we obtain a new ray
$\RR_{\ge 0}(0,-1)$ for $D_2$ also. Thus we have two cones spanned by the rays
corresponding to $D_1$ and $D_2$, and there is an integral linear transformation
identifying these two cones. In this case, this transformation
is $-\id$. After cutting out the fourth quadrant from $\RR^2$
and gluing the third and first quadrants via $-\id$, we obtain
the integral affine manifold $B$, along with a decomposition (or fan)
$\Sigma$ into rational polyhedral cones. Here the cones of $\Sigma$
consist of $\{0\}$, the images of the rays $\rho_1,\rho_2,\rho_3$,
and three two-dimensional cones $\sigma_{i,i+1}$ $i=1,2,3$, with
indices taken mod $3$, and $\sigma_{i,i+1}$ having faces $\rho_i$
and $\rho_{i+1}$. Note the rays correspond to irreducible components of $D$
and the two-dimensional cones to double points of $D$.

To see the details of this construction in general, and further
examples, see \cite{GHK11}, \S1.2.

We use the convention that $v_i\in B$ is the primitive integral point
on the ray $\rho_i$, so that any element of $\sigma_{i,i+1}$ can
be written as $av_i+bv_{i+1}$ for some $a,b\in \RR_{\ge 0}$.

While we have just described the general construction for $(B,\Sigma)$
as applied to our particular case, in fact there is a more
elegant description for the cubic surface. By continuing to build the
fan, we close up to get a fan $\widetilde\Sigma$ in $\RR^2$.
Then $B=\RR^2/\langle -\id\rangle$,
and $\widetilde\Sigma$ descends to $\Sigma$ on the quotient.
See the right-hand side of Figure \ref{Figure1}.

\begin{remark} 
\label{rem:cayley cubic}
Note that $\widetilde\Sigma$ defines a toric variety $\widetilde Y$
which is a del Pezzo surface of degree $6$. 
The automorphism $-\id$ of $\widetilde\Sigma$ induces an involution 
$\iota:\widetilde Y\rightarrow\widetilde Y$, which is given on the dense
torus orbit as $(z_1,z_2)\mapsto (z_1^{-1},z_2^{-1})$. This surface can
be embedded in $\PP^3$ as follows. First, one maps the quotient
of the dense torus orbit of $\widetilde Y$ to $\AA^3$ using the map
\[
(z_1,z_2)\mapsto (z_1+z_1^{-1}, z_2+z_2^{-1},z_1^{-1}z_2+z_1z_2^{-1}).
\]
The image satisfies the equation $x_1x_2x_3=x_1^2+x_2^2+x_3^2-4$, which is
then projectivized to obtain the Cayley cubic given by the equation
\[
x_1x_2x_3=x_0(x_1^2+x_2^2+x_3^2)-4x_0^3,
\]
the unique cubic surface with four ordinary double points, the images
of the fixed points of $\iota$. This is in fact isomorphic to 
$\widetilde Y/\langle\iota\rangle$.
\qed
\end{remark}

\begin{remark}
\label{rem:trop}
We write $B_0=B\setminus\{0\}$. Let $B_0(\ZZ)$ be the subset
of $B_0$ of points with integer coordinates with respect to any integral
linear chart. Set $B(\ZZ)=B_0(\ZZ)\cup \{0\}$.

The set $B(\ZZ)$ has another natural interpretation, as
the \emph{tropicalization} of the log Calabi-Yau manifold $U=Y\setminus D$,
see \cite{GHK13}, Definition 1.7. Here one takes a nowhere vanishing
$2$-form $\Omega$ on $U$ with at worst simple poles along $D$, and we have
\[
B(\ZZ)=\{\hbox{divsorial discrete valuations $\nu:k(U)^*\rightarrow\ZZ$}
\,|\,\nu(\Omega)<0\}\cup \{0\}.
\]
The advantage of this description is that an automorphism of $U$ which does
not extend to an automorphism of $Y$ still induces an automorphism of
$B(\ZZ)$, which in general extends to a piecewise linear automorphism
of $B$. \qed
\end{remark}

It will also be useful to consider piecewise linear functions on $B$
with respect to the fan $\Sigma$, i.e., continuous functions $F:B\rightarrow
\RR$ which restrict to linear functions on each $\sigma\in\Sigma$.
Just as in the toric case, there is in fact a one-to-one correspondence 
between such functions with integral slopes and divisors supported on
the boundary. Indeed, each boundary divisor $D_i$
defines a piecewise linear function on $B$, written as $\langle D_i,\cdot\rangle$,
uniquely defined by the requirement that
\[
\langle D_i,v_j\rangle=\delta_{ij}.
\]
Additively, this allows us to obtain a PL function $\langle D',\cdot\rangle$
associated to any divisor $D'$ supported on $D$. Conversely, given
a piecewise linear function $F:B\rightarrow\RR$ with integral slopes,
we obtain a divisor $\sum_i F(v_i)D_i$ supported on $D$.

\section{The scattering diagram associated to the cubic surface}

As $(B,\Sigma)$ only involves purely combinatorial information about
$(Y,D)$, it is insufficient to determine an interesting mirror
object. We need to include extra data of a \emph{scattering diagram}
on $(B,\Sigma)$. 

Before doing so, we need to select some additional auxiliary data,
namely a monoid $P\subseteq H_2(Y,\ZZ)$ of the form
$\sigma_P\cap H_2(Y,\ZZ)$ for $\sigma_P\subseteq H_2(Y,\RR)$
a strictly convex rational polyhedral cone which contains all effective
curve classes. In the case of the cubic surface, we may take
$\sigma_P$ to be the Mori cone, i.e., the cone generated by all
effective curve classes. We write $\kk[P]$ for the corresponding monoid
ring, and $\fom\subseteq\kk[P]$ for the maximal monomial ideal,
generated by $\{z^p\,|\, p\in P\setminus \{0\}\}$. 

\begin{definition}
\label{def:ray}
A \emph{ray} in $B$ is a pair $(\fod,f_{\fod})$ where:
\begin{enumerate}
\item $\fod\subseteq \sigma_{i,i+1}$ for some $i$  
is a ray generated by some $av_i+bv_{i+1}\not=0$, $a,b\in \ZZ_{\ge 0}$
relatively prime. We call $\fod$ the \emph{support} of the ray.
\item $f_{\fod}=1+\sum_{k\ge 1} c_k X_i^{-ak}X_{i+1}^{-bk}
\in \kk[P]\lfor X_i^{-a}X_{i+1}^{-b}\rfor$ with $c_k\in\fom$ for all
$k$, satisfying the property
that for any monomial ideal $I\subseteq\kk[P]$ with $\kk[P]/I$
Artinian, (i.e., $I$ is co-Artinian), $f_{\fod}\mod I$ is a finite sum.
\end{enumerate}
\end{definition}

\begin{definition}
A \emph{scattering diagram} $\foD$ for $B$ is a collection of rays
with the property that for each co-Artinian monomial ideal $I\subseteq\kk[P]$,
\[
\foD_I:=\{(\fod,f_{\fod})\in \foD\,|\, f_{\fod}\not\equiv 1\mod I\}
\]
is finite. We assume further that $\foD$ contains at most one ray with
a given support.
\end{definition}

The purpose of a scattering diagram is to give a way of building a flat
family over $\Spec A_I$, where $A_I:=\kk[P]/I$. Explicitly, suppose given a 
scattering diagram $\foD$ and a co-Artinian ideal $I$. Assume
further that each $\rho_i$ is the support of a ray $(\rho_i,f_i)\in
\foD$. This ray is allowed to be trivial, i.e., $f_i=1$. Now define
rings
\begin{align*}
R_{i,I}:= {} & A_I[X_{i-1},X_i^{\pm 1},X_{i+1}]/(X_{i-1}X_{i+1}
-z^{[D_i]}X_i^{-D_i^2}f_i)\\
R_{i,i+1,I}:={}  & A_I[X_i^{\pm 1}, X_{i+1}^{\pm 1}].
\end{align*}
Here $z^{[D_i]}$ is the monomial in $\kk[P]$ corresponding to the class
of the boundary curve $D_i$, necessarily lying in $P$ by the assumption that
$P$ contains all effective curve classes. 

Localizing, note we have canonical isomorphisms 
\[
\hbox{$(R_{i,I})_{X_{i+1}}\cong R_{i,i+1,I}$ and
$(R_{i,I})_{X_{i-1}}\cong R_{i-1,i,I}$}.
\]

Set
\[
\hbox{$U_{i,I}:=\Spec R_{i,I}$ and $U_{i,i+1,I}:=\Spec R_{i,i+1,I}$.}
\]
Note that if $I=\fom$, then $U_{i,I}$ is the reducible
variety defined by $X_{i-1}X_{i+1}=0$ in $\AA^2_{X_{i-1},X_{i+1}}
\times(\GG_m)_{X_i}$, where the subscripts denote the coordinates
on the respective factors. On the other hand, $U_{i,i+1,I}=(\GG_m^2)_{X_i,
X_{i+1}}$. For more general $I$, we instead obtain thickenings
of these schemes just described.

For any $I$, we have canonical open immersions $U_{i-1,i,I},U_{i,i+1,I}
\hookrightarrow U_{i,I}$. As $U_{i,I}$ and $U_{i,\fom}$ have the same
underlying topological space, we can describe the underlying
open sets in $U_{i,\fom}$ of these two open immersions as subsets of
$V(X_{i-1}X_{i+1}) \subseteq \AA^2\times \GG_m$ as follows. We have
$U_{i-1,i,I}$ is given by the open set where
$X_{i-1}\not=0$ (hence $X_{i+1}=0$) and 
$U_{i,i+1,I}$ is given by the open set where
$X_{i+1}\not=0$ (hence $X_{i-1}=0$).
Thus in particular the images of these immersions are
disjoint. Thus, if for all $i$ we glue $U_{i,I}$ and $U_{i+1,I}$
via the canonically identified copies of $U_{i,i+1,I}$, there
is no cocycle gluing condition to check
and we obtain a scheme $X_I^{\circ}$ flat over $\Spec A_I$.

It is easy to describe this if we take $I=\fom$.
One obtains
in this case that $X_I^{\circ}=\VV_n\setminus \{0\}$, where $n$ is
the number of irreducible components of $D$ and, assuming $n\ge 3$,
\[
\VV_n=\AA^2_{x_1,x_2}\cup\cdots\cup \AA^2_{x_{n-1},x_n}
\cup \AA^2_{x_n,x_1}\subseteq \AA^n=\Spec\kk[x_1,\ldots,x_n],
\]
where $\AA^2_{x_i,x_{i+1}}$ denotes the affine coordinate plane
in $\AA^n$ for which all coordinates but $x_i,x_{i+1}$ are zero.
Here, $\VV_n$ is called the \emph{$n$-vertex}.

The problem is that for $I$ general, $X^\circ_I$ may be insufficiently
well-behaved to extend to a flat deformation of $\VV_n$. To do so, we need
to perturb the
gluings we made above, and the role of the scattering diagram is to 
provide a data structure for doing so.

Let $\gamma:[0,1]\rightarrow \Int(\sigma_{i,i+1})$ be a path. We define
an automorphism of $R_{i,i+1,I}$ called the \emph{path ordered product}.
Assume that whenever $\gamma$ crosses a ray in $\foD_I$ it passes
from one side of the ray to the other. In particular, suppose $\gamma$
crosses a given ray 
\[
(\fod=\RR_{\ge 0}(av_i+bv_{i+1}), f_{\fod})\in \foD_I
\]
with $a,b$ relatively prime.
Define the $A_I$-algebra homomorphism
$\theta_{\gamma,\fod}:R_{i,i+1,I}\rightarrow R_{i,i+1,I}$
by
\begin{align*}
\theta_{\gamma,\fod}(X_i)={} & X_if_{\fod}^{\mp b}\\
\theta_{\gamma,\fod}(X_{i+1})={}& X_{i+1}f_\fod^{\pm a}
\end{align*}
where the signs are $-b, +a$ if $\gamma$ passes from the $\rho_{i+1}$
side of $\fod$ to the $\rho_i$ side of $\fod$, and $+b,-a$ if $\gamma$
crosses in the opposite direction.
Note these two choices are inverse automorphisms
of $R_{i,i+1,I}$, and $f_{\fod}$ is invertible
because $f_{\fod}\equiv 1 \mod \fom$ from Definition \ref{def:ray}, (2).

If $\gamma$ crosses precisely the rays
$(\fod_1,f_{\fod_1}),\ldots, (\fod_s,f_{\fod_s})\in \foD_I$, in that order,
then we define the \emph{path ordered product}
\[
\theta_{\gamma,\foD}:=\theta_{\gamma,\fod_s}\circ\cdots\circ
\theta_{\gamma,\fod_1}.
\]

Now, for each $i$, choose a path $\gamma$ inside $\sigma_{i,i+1}$ which starts
near $\rho_{i+1}$ and ends near $\rho_i$ so that it crosses 
all rays of $\foD_I$ intersecting the interior of $\sigma_{i,i+1}$.
Then $\theta_{\gamma,\foD}$ induces an automorphism 
$\theta_{\gamma,\foD}:U_{i,i+1,I}\rightarrow U_{i,i+1,I}$,
and we can use this to modify our gluing via
\[
\xymatrix@C=30pt
{
U_{i,I}& U_{i,i+1,I}\ar@{_{(}->}[l]\ar[r]^{\theta_{\gamma,\foD}}&U_{i,i+1,I}
\ar@{^{(}->}[r]&U_{i+1,I}.
}
\]
This produces a new scheme $X^{\circ}_{I,\foD}$, still a flat deformation
of $\VV_n\setminus \{0\}$ over $\Spec A_I$.

Now comes the key point: we need to make a good choice of $\foD$ in
order to be able to construct a partial compactification $X_{I,\foD}$
of $X^{\circ}_{I,\foD}$ such that $X_{I,\foD}\rightarrow \Spec A_I$
is a flat deformation of $\VV_n$.
One of the main ideas of \cite{GHK11} is the use of results of
\cite{GPS} to write down a good choice of scattering diagram,
the \emph{canonical scattering diagram}, in terms of
relative Gromov-Witten invariants of the pair $(Y,D)$.

We first discuss the nature of these invariants. Choose
a curve class $\beta$ and a point $v\in B_0(\ZZ)$, say
$v=av_i+bv_{i+1}$. We sketch the construction of a Gromov-Witten type
invariant $N^{\beta}_v$ counting
what we call \emph{$\AA^1$-curves}. Roughly speaking, these 
are one-pointed stable
maps of genus $0$, $f:(C,p)\rightarrow Y$, representing the
class $\beta$, with $f^{-1}(D)=\{p\}$. Further, 
$f$ has contact order $\langle D_i,v\rangle$ with $D_i$ at $p$.
Roughly, this contact order is the order of vanishing of
the regular function $f^*(t)$ at $p$, for $t$ a local defining
equation for $D_i$ at $f(p)$.
However, as stated, this isn't quite right because of standard
issues of compactness in relative Gromov-Witten theory.
In \cite{GHK11}, these numbers are defined rigorously following
\cite{GPS} by peforming a weighted
blow-up of $(Y,D)$ at $D_i\cap D_{i+1}$ determined by $\fod$ and then using
relative Gromov-Witten theory. As relative Gromov-Witten theory only
works relative to a smooth divisor, one removes all double points
of the proper transform of $D$ under this blow-up, and then shows that
this doesn't interfere with compactness of the moduli space.
We refer to \cite{GHK11}, \S3.1 for the precise definition, as we will
not need here the subtleties of the general definition.
However, we note that in order for such a map to exist, and hence
possibly have $N^\beta_v\not=0$, we must have $\beta$ an effective
curve class and 
\[
\beta\cdot D_j=\langle D_j,v\rangle.
\]

A more modern definition of these invariants is via logarithmic
Gromov-Witten theory, as developed by \cite{JAMS},\cite{AC14},\cite{Chen14}. 
Using that theory, one
can allow contact orders with multiple divisors simultaneously, and thus
do not need to perform the weighted blow-up. It follows from invariance
of logarithmic Gromov-Witten theory under toric blow-ups \cite{AW18}
and
the comparison theorem of relative and logarithmic invariants \cite{AMW} that
these two definitions agree.

\begin{definition}
The \emph{canonical scattering diagram} $\foD_{\can}$ of $(Y,D)$
consists of rays $(\fod,f_{\fod})$ ranging over all possible supports
$\fod\subseteq B$ where, if $\fod\subseteq\sigma_{i,i+1}$ with
$\fod=\RR_{\ge 0}(av_i+bv_{i+1})$ and $a,b$ relatively prime,
then
\[
f_{\fod}=\exp\left(\sum_{k\ge 1}\sum_{\beta\in H_2(Y,\ZZ)}
kN^{\beta}_{akv_i+bkv_{i+1}} z^{\beta}(X_i^{-a}X_{i+1}^{-b})^k\right).
\]
\end{definition}

\bigskip

We now return to the cubic, where $\foD_{\can}$ is particularly interesting.
One might also consider higher degree del Pezzo surfaces. However, del Pezzo
surfaces of degree $6,7,8$ and $9$ are all toric, assuming one takes
as $D$ the toric boundary, and they have a trivial scattering diagram (i.e.,
all $f_{\fod}=1$ as the invariants $N^{\beta}_v$ are always zero).
The case of a degree $5$ del Pezzo surface was considered as a running
example in \cite{GHK11}, see e.g., Example 3.7 there. A degree $4$ 
surface is not that much more complicated, see \cite{B18} for details.
On the other hand, for the cubic surface, no $f_{\fod}$
is $1$, but nevertheless we can essentially determine $f_{\fod}$.
On the other hand, the degree $2$ del Pezzo surface 
requires use of a computer to analyze, see \cite{B18}.

To describe curve classes on the cubic surface $Y$, we use
the description of $Y$ as a blow-up of $\PP^2$ given in the
introduction, so that $H_2(Y,\ZZ)$ is generated by the classes
of the exceptional divisors $E_{ij}$, $1\le i\le 3$, $1\le j\le 2$,
and the class $L$ of a pull-back of a line in $\PP^2$.

With this notation, we have:

\begin{proposition}
\label{prop:first ray}
The ray $(\rho_i,f_{\rho_i})$ satisfies
\[
f_{\rho_i}={\prod_{j=1}^8 (1+z^{L_{ij}} X_i^{-1})
\over (1-z^{D_k+D_\ell}X_i^{-2})^4},
\]
where the $L_{ij}$ as in the introduction are the lines not contained
in $D$ but meet
$D_i$, and $\{i,k,\ell\}=\{1,2,3\}$.
\end{proposition}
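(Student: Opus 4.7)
The plan is to identify all irreducible $\AA^1$-curves on $(Y,D)$ contributing to the ray $\rho_i$, and then apply standard log multi-cover formulas. First, by the defining constraints $\beta\cdot D_i=k$ and $\beta\cdot D_j=0$ for $j\neq i$, together with the fact that the Mori cone of the cubic is generated by the 27 lines (namely $D_1, D_2, D_3$ and the $L_{mj}$ for $m\in\{1,2,3\}$, $j\in\{1,\ldots,8\}$), an elementary intersection-number calculation shows any effective contributing $\beta$ has the form
\[
\beta = a(D_k+D_\ell) + \sum_{j=1}^8 b_{ij} L_{ij}, \qquad a,\, b_{ij}\in\ZZ_{\geq 0},\quad 2a + \sum_j b_{ij} = k,
\]
where $\{i,k,\ell\}=\{1,2,3\}$. (Adding the equations $\beta\cdot D_k=\beta\cdot D_\ell=0$ already forces the coefficients of $D_i$, $L_{kj}$, $L_{\ell j}$ to be zero and the coefficients of $D_k, D_\ell$ to be equal.)

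Next I would enumerate the irreducible $\AA^1$-curves underlying these classes. There are two families. The first is the 8 lines $L_{ij}$, each a $(-1)$-curve meeting $D$ transversally at one point of $D_i$. The second is smooth rational curves in the class $D_k+D_\ell = -K_Y - D_i$ tangent to $D_i$: the system $|D_k+D_\ell|$ corresponds to the pencil of plane conics through the four blown-up points on $\bar D_k\cup\bar D_\ell$, and restriction to $\bar D_i\cong\PP^1$ produces a quadratic whose discriminant is a polynomial of degree $2$ in the pencil parameter, giving exactly 2 smooth tangent conics. (The reducible member $D_k\cup D_\ell$ of the pencil meets $D_i$ at two distinct nodes of $D$ and does not contribute an $\AA^1$-curve.) Applying standard log multi-cover formulas, each $(-1)$-line $L_{ij}$ contributes a factor $(1+z^{L_{ij}}X_i^{-1})$, arising from $N^{dL_{ij}}_{dv_i}=(-1)^{d-1}/d^2$ (see \cite{GPS}), and each smooth tangent conic $C$ (a $(0)$-curve with contact order $2$ at $D_i$) contributes $(1-z^{C}X_i^{-2})^{-2}$, coming from $N^{dC}_{2dv_i}=1/d^2$. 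Since both tangent conics belong to the class $D_k+D_\ell$, their combined contribution is $(1-z^{D_k+D_\ell}X_i^{-2})^{-4}$. Multiplying the two factors gives the claimed formula.

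The main obstacle is to rule out contributions from mixed classes, i.e., $\beta = a(D_k+D_\ell) + \sum b_{ij}L_{ij}$ with $a>0$ and some $b_{ij}>0$. For any such $\beta$ the linear system $|\beta|$ reduces to unions of a conic in $|D_k+D_\ell|$ with some of the $L_{ij}$; for a reducible stable map of this form to give an $\AA^1$-curve (contact only at a single point of $D_i$), every component must meet $D_i$ at one common point with the appropriate tangency, a codimension condition that over-determines the moduli and forces the virtual count to be zero. Alternatively, once the pure contributions are established, consistency of the canonical scattering diagram (\cite{GHK11}, Theorem 3.25) combined with the Kontsevich--Soibelman uniqueness lemma fixes $f_{\rho_i}$ with no further factors. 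Either route completes the proof.
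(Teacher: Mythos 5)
Your reduction of the possible classes via the Mori cone is a clean alternative to the paper's route (the paper instead writes $\beta=aL-\sum b_{mj}E_{mj}$ and uses the adjunction bound $p_a\ge 0$ to enumerate classes), and your identification of the two tangent conics in $|D_k+D_\ell|$ and the multi-cover bookkeeping $(1+z^{L_{ij}}X_i^{-1})$ per line and $(1-z^{C}X_i^{-2})^{-2}$ per tangent conic agree with the paper, which cites \cite{GPS}, Proposition 6.1. However, note that the cone computation only constrains the \emph{class}; your assertion that the only irreducible $\AA^1$-curves are the eight lines and the two tangent conics still needs the genus bound (e.g.\ to exclude an irreducible curve in class $a(D_k+D_\ell)$, $a\ge 2$, maximally tangent to $D_i$: here $p_a=1-a<0$). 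That part is fillable.

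The genuine gap is in ruling out reducible images. Your claim that the condition ``every component meets $D_i$ at one common point with the appropriate tangency'' is ``a codimension condition that over-determines the moduli and forces the virtual count to be zero'' is not a valid argument: virtual counts are designed precisely to be nonzero in over-determined or excess-dimensional situations, and if such a configuration existed it could well contribute. The paper's argument is geometric and uses genericity: by \cite{GP}, Lemma 4.2, all components of the image must pass through a single point of $D_i$; two lines through a point of $D_i$ would produce an Eckardt point, which a generic cubic does not have (and deformation invariance lets one assume $(Y,D)$ generic); and any line $L_{ij}$ has $L_{ij}\cdot(D_k+D_\ell)=0$, so it is a component of a fibre of the conic bundle $|D_k+D_\ell|$ and is therefore disjoint from every other fibre, killing line-plus-conic and conic-plus-conic configurations. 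Note also that your case division by ``mixed classes'' ($a>0$ and some $b_{ij}>0$) misses the configuration of two intersecting lines $L_{ij}\cup L_{ij'}$, whose class \emph{is} $D_k+D_\ell$ (i.e.\ $a=1$, all $b=0$); this is exactly the Eckardt-point case. Finally, the proposed fallback via consistency of $\foD_{\can}$ plus Kontsevich--Soibelman uniqueness does not rescue the argument: $\rho_i$ is a boundary ray of the singular affine structure, not an output of a scattering algorithm from known initial data, so consistency alone does not determine $f_{\rho_i}$ without an actual computation equivalent to the one being avoided.
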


\begin{proof}
We take $i=1$, the other cases following from symmetry.
We need to calculate the numbers $N^{\beta}_{kv_1}$. In particular,
for $\beta$ to be represented by an $\AA^1$-curve contributing
to $N^{\beta}_{kv_1}$, we must have 
$\beta\cdot D_1=k$ and $\beta\cdot D_i=0$ for $i\not=1$.

We will first consider those curve classes $\beta$ which may be
the curve class of a generically injective map $f:\PP^1\rightarrow
Y$ with the above intersection numbers with the $D_i$. Write
\[
\beta=aL-\sum_{i,j} b_{ij}E_{ij}.
\]
Then 
\[
k=\beta\cdot D_1=a-b_{11}-b_{12}, \quad
0=\beta\cdot D_2=a-b_{21}-b_{22}, \quad
0=\beta\cdot D_3=a-b_{31}-b_{32}.
\]
Thus
\[
a=b_{21}+b_{22}=b_{31}+b_{32}.
\]

Further, we must have $p_a(f(C))\ge 0$, so by adjunction and the
fact that $K_Y=-D$,
\begin{equation}
\label{eq:adj}
-2\le 2p_a(f(C))-2=\beta\cdot(\beta+K_Y)=a^2-\sum_{i,j} b_{ij}^2-k.
\end{equation}

Now of course the curve classes $E_{11}$, $E_{12}$ satisfy the above
equalities and inequality, with $k=1$, while 
$E_{ij}$, $i\not=1$ do not.
Then any other class of an irreducible curve which may
contribute necessarily
has $a>0$ and $b_{ij}\ge 0$. Let us fix $a$ and $k$ and try to maximize
the right-hand side of \eqref{eq:adj} in the hopes that we can
make it at least $-2$. This means in particular that we should try to minimize
$b_{i1}^2+b_{i2}^2$ for $i=2,3$.

We split the analysis into two cases. If $a$ is even, then this sum
of squares is minimized by taking $b_{i1}=b_{i2}=a/2$. Thus we
see that 
\[
-2\le 2p_a(f(C))-2\le a^2-k-b_{11}^2-b_{12}^2-4(a^2/4)=-k-b_{11}^2-b_{12}^2.
\]
Since $k\ge 1$, we see we immediately get three possibilities:
\begin{enumerate}
\item $k=1$, $b_{11}=1$, $b_{12}=0$, in which case $a=2$ and 
the only possible curve class is $\beta=2L-E_{11}-E_{21}-\cdots-E_{32}$.
\item $k=1$, $b_{11}=0$, $b_{12}=1$, in which case $a=2$ and 
the only possible curve class is $\beta=2L-E_{12}-E_{21}-\cdots-E_{32}$.
\item $k=2$, $b_{11}=b_{12}=0$, in which case $a=2$ and the only 
possible curve class is $2L-E_{21}-\cdots-E_{32}$.
\end{enumerate}

If $a$ is odd, then we minimize $b_{i1}^2+b_{i2}^2$ by taking
$b_{i1}=(a-1)/2$, $b_{i2}=(a+1)/2$ or vice versa. Thus
\[
a^2-b_{21}^2-\cdots-b_{32}^2\le a^2-2\left({(a-1)^2\over 4}
+{(a+1)^2\over 4}\right)=-1.
\]
Again, since $k\ge 1$, the only possibility is $k=1$, $b_{11}=b_{12}=0$,
and hence $a=1$, giving the following possible choices for $\beta$:
\[
L-E_{21}-E_{31}, \quad L-E_{21}-E_{32}, \quad L-E_{22}-E_{31}, \quad
L-E_{22}-E_{32}.
\]
Note that these four classes, along with $E_{11}$, $E_{12}$,  and
cases (1) and (2) in the $a$ even case, represent the $8$ $(-1)$-curves
in $Y$ which meet $D_1$ transversally, i.e., the curves $L_{1j}$.
Each of these curve classes is
then represented by a unique $\AA^1$-curve, and $N^{\beta}_{v_1}=1$
in these cases.

In the case $a=k=2$, we consider
the curve class $\beta=2L-E_{21}-\cdots-E_{32}\sim D_2+D_3$. Note
that the linear system $|D_2+D_3|$ induces a conic bundle 
$g:Y\rightarrow \PP^1$, and $D_1$ is a $2$-section of $g$, i.e.,
$g|_{D_1}:D_1\rightarrow \PP^1$ is a double cover, necessarily
branched over two points $p_1, p_2\in\PP^1$. Thus the conics
$f^{-1}(p_1)$, $f^{-1}(p_2)$ are also $\AA^1$-curves, now with contact
order $2$ with $D_1$. So $N^{\beta}_{2v_1}=2$.\footnote{In general,
in Gromov-Witten theory, it is not enough to just count the stable maps,
as there may be a virtual count. However, in all the cases just
considered, the stable map $f:C\rightarrow Y$ in question is a closed immersion,
and hence has no automorphisms as a stable map. Further, the obstruction
space to the moduli space of stable maps at the point $[f]$ is
$H^1(C,f^*T_Y(-\log D))$, which is seen without much difficulty to
vanish. Hence each curve in fact contributes $1$ to the Gromov-Witten
number.}

Unfortunately, these are not the only $\AA^1$-curves, as there may be
stable maps $f:(C,p)\rightarrow Y$ which are either not generically
injective or don't have irreducible image. Indeed, one may
have multiple covers of one of the above $\AA^1$-curves already
considered, provided the cover is totally ramified at the point $p$
of contact with $D$. However, for general choice of $(Y,D)$, we will now
show that there is no possibility of reducible images. 

As argued
in \cite{GP}, Lemma 4.2, the image of any $\AA^1$-curve must be a 
union of irreducible
curves each of which intersect the boundary at the same point.
In particular, if $f(C)=C_1\cup\cdots\cup C_n$ is the irreducible 
decomposition, then $D\cap C_1\cap \cdots \cap C_n$ consists of one
point, necessarily contained in $D_1$.

However, since $C_1,\ldots,C_n$ must be a subset of the $10$ curves
identified above, the possibilities are as follows. The first is that
two of these
curves are lines on the cubic surface, and hence we must have three
lines (including $D_1$) intersecting in a common point. Such a 
point on a cubic surface is called an \emph{Eckardt point}, see
\cite{Dolg}, \S9.1.4. However, the set of cubic surfaces containing
Eckardt points is codimension one in the moduli of all cubic surfaces.
Since we may assume $(Y,D)$ is general in moduli (as the Gromov-Witten
invariants being calculated are deformation invariant), we may
thus assume $Y$ has no Eckardt points, so this doesn't occur.

On the other hand, one or both of $C_1,C_2$ could be fibres of
the conic bundle induced by $|D_2+D_3|$. Since two distinct fibres
are disjoint, they can't both be fibres of the conic bundle. 
Further, any line $E$
of the cubic surface intersecting
$D_1$ at one point has $E\cdot (D_2+D_3)=0$, and hence is contained
in a fibre of the conic bundle $g$, and thus is again disjoint
from a different fibre of $g$. 

We thus come to the conclusion that any stable map contributing to the
$\AA^1$-curve count must have irreducible image, and hence be a 
multiple cover of one of the curves discussed above. The moduli space of
such multiple covers is always positive dimensional, but happily the
virtual count has been calculated in \cite{GPS}, Proposition 6.1.
Degree $d$ covers of a non-singular rational curve which meets $D$ 
transversally contributes $(-1)^{d-1}/d^2$, whilst degree $d$ covers
of a non-singular rational curve which is simply tangent to $D$ is
$1/d^2$. Note 
\[
\exp\left(\sum_{d\ge 1}d\cdot {(-1)^{d-1}\over d^2} z^{d\beta} X_1^{-d}\right)
=1+z^{\beta} X_1^{-1}
\]
and
\[
\exp\left(\sum_{d\ge 1}2d\cdot {1\over d^2} z^{d\beta} X_1^{-2d}\right)
={1\over (1-z^{\beta} X_1^{-2})^2}.
\]
From this the result follows.
\end{proof}

We now observe that the cubic surface carries sufficient symmetry
so that the above computation determines the scattering diagram completely.

Noting that the group $\SL_2(\ZZ)$ acts on $\RR^2$
and $-\id$ lies in the centre of $\SL_2(\ZZ)$, we obtain an action of
$\PGL_2(\ZZ)$ on $B=\RR^2/\langle -\id\rangle$. Of course, this action
acts transitively on all the rays of rational slope in $B$, so if we can
show that this action preserves the scattering diagram in a certain sense,
we will have completely determined the scattering diagram.

We first observe that there is a rotational symmetry. For example,
the calculation of $f_{\rho_1}$ equally applies to $f_{\rho_2}$ and
$f_{\rho_3}$, subject to a change of relevant curve classes.
More generally, if we know $f_{\fod}$ for $\fod=\RR_{\ge 0} (av_i+bv_{i+1})$,
then we know it for $S(\fod)$, where $S(av_i+bv_{i+1})=
av_{i+1} + bv_{i+2}$, with indices taken modulo $3$. Here $S$ is
an automorphism of $B$ which lifts to an automorphism of the cover
$\RR^2$, with $S(1,0)=(0,1)$ and $S(0,1)=(-1,1)$ (so that $S(-1,1)=(-1,0)$,
completing the rotation). Thus on the cover, $S$ is represented
by $\begin{pmatrix} 0&-1\\ 1&1\end{pmatrix}$. We also have an action $S^*$
on $H_2(Y,\ZZ)$ given by $S^*(L)=L$, $S^*(E_{ij})=E_{i+1,j}$.
Then we can 
write $f_{S(\fod)}=S^*(f_{\fod})$, where the action of $S^*$ on $f_{\fod}$
is given by $X_i^{-ka}X_{i+1}^{-kb}\mapsto X_{i+1}^{-ka}X_{i+2}^{-kb}$
and $z^{\beta}\mapsto z^{S^*(\beta)}$.  

The second symmetry arises from a birational change to the boundary.
We may blow-up the point of intersection of $D_1$ and $D_2$,
and blow-down $D_3$, to obtain a surface $(Y',D')$ with $Y'\setminus
D'=Y\setminus D$. We use the convention that
$D'=D_1'+D_2'+D_3'$ with $D_1'$ the strict transform of $D_1$, $D_2'$
the exceptional curve of the blow-up, and $D_3'$ the strict transform of
$D_2$.

But in fact $Y'$ is still a cubic surface, and
hence we may apply the calculation of Proposition \ref{prop:first ray}
with respect to the new divisor $D_2'$. Because of the way 
$\AA^1$-curve counts are defined, these counts do not depend on
toric blow-ups and blow-downs of the boundary. Thus if we know a ray
in the scattering diagram for $(Y',D')$, we have a corresponding ray
in the scattering diagram for $(Y,D)$. For example, it is not difficult
to check that for $1\le j\le 8$, the curve in the pencil $|D_3+L_{3j}|$
passing through $D_1\cap D_2$ has strict transform in $(Y',D')$
a line meeting $D_2'$. On the other hand, the strict transform of a
curve of class $D_1+D_2+2D_3$ on $(Y,D)$ which is cuspidal at
$D_1\cap D_2$ is a conic on $Y'$ which meets $D_2'$ tangentially.

To see this as an action on $B$,
let $B'$ be the integral linear manifold with singularities
corresponding to $(Y',D')$. Then there is a canonical piecewise
linear identification of $B'$ with $B$ arising from the description of 
the tropicalization of Remark \ref{rem:trop}.
In particular, this identification sends $v_1'$ to $v_1$,
$v_2'$ with $v_1+v_2$, and $v_3'$ with $v_2$. Thus if we know
a ray $(\fod',f_{\fod'})$ for $(Y',D')$, we obtain a ray
$(\fod,f_{\fod})$ for $(Y,D)$ under this identification. Instead, we can view
this identification as giving an automorphism of $B$, i.e., 
consider the automorphism $T$ given by $v_1\mapsto v_1$, $v_2\mapsto v_1+v_2$
and $v_3\mapsto v_2$. Note this is induced by
$\begin{pmatrix} 1&1\\ 0&1\end{pmatrix}\in\SL_2(\ZZ)$.

It is not difficult to work out the action\footnote{In fact this action
is not unique: it can always be composed with an automorphism
of $H^2(Y,\ZZ)$ preserving the intersection form, permuting the $(-1)$-curves,
and keeping the boundary divisors
$D_1,D_2,D_3$ fixed. We give one possible action.}

 $T^*$ on $H_2(Y,\ZZ)$. It is
\begin{align*}
L\mapsto {} & 2L-E_{31}-E_{32}\\
E_{1j}\mapsto {} &E_{1j}\\
E_{2j}\mapsto {} &L-E_{3j}\\
E_{3j}\mapsto {} &E_{2j}
\end{align*}
for $j=1,2$. Then the symmetry $T$ takes a ray $(\fod,f_{\fod})$ to a ray
$(T(\fod),T^*(f_{\fod}))$, where $T^*(f_{\fod})$ does the obvious thing.
In particular, one ray in $\foD_{\can}$ is
$\fod=\RR_{\ge 0}(v_1+v_2)$ with
\begin{equation}
\label{eq:v1v2ray}
f_{\fod}:={\prod_{j=1}^8 (1+z^{D_3+L_{3j}}X_1^{-1}X_2^{-1})\over
(1-z^{D_1+D_2+2D_3}X_1^{-2}X_2^{-2})^4}
\end{equation}
Happily, this is
the only additional ray we will need to understand other than 
$\rho_1,\rho_2$ and $\rho_3$.

Since $S$ and $T$ generate $\SL_2(\ZZ)$, we have now proved:

\begin{theorem}
\label{thm:scat}
Let $\fod=\RR_{\ge 0} (av_i+bv_{i+1})$ for $a,b\in\ZZ_{\ge 0}$ relatively prime.
Then there exists curve classes $\beta_1,\ldots,\beta_9\in H_2(Y,\ZZ)$
such that
\[
f_{\fod}={\prod_{j=1}^8 (1+z^{\beta_j} X_i^{-a}X_{i+1}^{-b})
\over (1-z^{\beta_9}X_i^{-2a}X_{i+1}^{-2b})^4}.
\]
\end{theorem}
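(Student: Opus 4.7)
The plan is to propagate the explicit computation of Proposition \ref{prop:first ray} from $\rho_1$ to every rational ray in $B$ by using the two symmetries $S$ and $T$ introduced above. On the double cover $\RR^2 \to B$, these act by the matrices $\begin{pmatrix}0 & -1\\ 1 & 1\end{pmatrix}$ and $\begin{pmatrix}1 & 1\\ 0 & 1\end{pmatrix}$, which generate $\SL_2(\ZZ)$, so the induced action of $\PGL_2(\ZZ) = \SL_2(\ZZ)/\langle -\id\rangle$ on $B$ is transitive on the set of rays of rational slope. In particular, for any $\fod = \RR_{\geq 0}(av_i+bv_{i+1})$ with $\gcd(a,b)=1$, there exists $g \in \langle S, T\rangle$ with $g(\rho_1) = \fod$.

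Next I would show that $\foD_{\can}$ is invariant under this action, with the induced action on each function $f_\fod$ given by the relabeling of the coordinate monomials $X_i$ dictated by the action on $B$, together with the natural pullback on curve classes ($S^*$ or $T^*$, respectively). For $S$, invariance follows from deformation invariance of $\AA^1$-curve counts: one deforms $(Y,D)$ to a pair admitting an order-$3$ automorphism cyclically permuting $D_1, D_2, D_3$. For $T$, invariance follows from the intrinsic (tropical) description of Remark \ref{rem:trop}: the mutation restricts to an isomorphism $Y\setminus D \cong Y'\setminus D'$ of the interiors, so all divisorial valuations and tropicalizations, and hence the $N^\beta_v$ and the scattering diagrams themselves, agree under the canonical piecewise linear identification $B \cong B'$.

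Granted these invariances, the stated algebraic form is manifestly preserved: each of $S^*$ and $T^*$ substitutes $X_i^{-a}X_{i+1}^{-b}$ by some monomial $X_{i'}^{-a'}X_{i'+1}^{-b'}$ (with $\gcd(a',b')=1$, since the group acts by integral linear maps on each maximal cone of $\Sigma$) and replaces each $z^{\beta_\ell}$ by $z^{g^*(\beta_\ell)}$; the number $8$ of factors in the numerator and the exponent $4$ in the denominator are untouched. Applying $g^*$ to $f_{\rho_1}$ from Proposition \ref{prop:first ray} therefore produces $f_\fod$ in the required form, with $\beta_j = g^*(L_{1j})$ for $1 \leq j \leq 8$ and $\beta_9 = g^*(D_2+D_3)$.

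The main obstacle is the verification of $T$-invariance of $\foD_{\can}$, since $T$ is only piecewise linear on $B$ and may move rays between maximal cones of $\Sigma$ (for instance, it sends rays in $\sigma_{23}$ into $\sigma_{12}$). The key technical input is the intrinsic description of Remark \ref{rem:trop}, which makes the canonical scattering diagram a function of the interior $U = Y\setminus D$ alone, so that the rule $(\fod, f_\fod) \mapsto (T(\fod), T^*(f_\fod))$ is automatic from the fact that $T$ is induced by an isomorphism of interiors, with the explicit action of $T^*$ on $H_2(Y,\ZZ)$ as recorded in the paragraph preceding \eqref{eq:v1v2ray}.
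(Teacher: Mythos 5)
Your plan is correct and follows essentially the same route as the paper: compute $f_{\rho_1}$ explicitly, then propagate to all rational rays using the rotational symmetry $S$ and the corner blow-up/blow-down symmetry $T$, which generate $\SL_2(\ZZ)$ and hence act transitively on rays, with $T$-invariance resting on the invariance of the $\AA^1$-curve counts under toric modifications of the boundary and the resulting PL identification $B\cong B'$. The only cosmetic difference is your justification of $S$-invariance by deforming to a $\ZZ/3$-symmetric pair, where the paper simply observes that the computation of Proposition \ref{prop:first ray} is already symmetric in the indices $i=1,2,3$.
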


We note that this $\SL_2(\ZZ)$-action has a beautiful explanation in
terms of work of Cantat and Loray \cite{CL}. They describe the $\SL_2(\CC)$
character variety of the four-punctured sphere $S^2_4=S^2\setminus
\{p_1,\ldots,p_4\}$, i.e., the variety
of $\SL_2(\CC)$ representations of the fundamental group $\pi_1(S^2_4)$,
up to conjugation by elements of $\SL_2(\CC)$. 
This character variety is naturally embedded in $\AA^7$ with
coordinates $x,y,z,A,B,C,D$ and has equation
\[
xyz+x^2+y^2+z^2=Ax+By+CZ+D,
\]
i.e., is a family of affine cubic surfaces whose natural compactifications
in $\PP^3$ are then precisely of the form we are considering.

Now $S^2$ can be viewed as a quotient of a torus $T^2=\RR^2/\ZZ^2$ 
by negation, $S^2=T^2/\langle -\id\rangle$, and the map
$T^2\rightarrow S^2$ has four branch points, the two-torsion points
of $T^2$. We take the image of these branch points to be $p_1,\ldots,p_4$,
so that any element of $\SL_2(\ZZ)$ acting on $T^2$ then induces
an automorphism of $S^2_4$, possibly permuting the punctures. Thus
we obtain a $\PGL_2(\ZZ)$ action on $S^2_4$, and hence a $\PGL_2(\ZZ)$
action on the character variety, which in fact is compatible with the
projection to $\AA^4$ with coordinates $A,B,C,D$. An element
of $\PGL_2(\ZZ)$ permutes fibres if it permutes two-torsion points.
Thus, we obtain an action of $\PGL_2(\ZZ)$ on the ``relative''
tropicalization of this family of log Calabi-Yau manifolds, i.e., the set of 
valuations with centers surjecting onto $\AA^4$ and with simple poles
of the relative holomorphic $2$-form. This can be shown to be
the same action considered above generated by $S$ and $T$.
We omit the details.

\section{Broken lines, theta functions and the derivation of the equation}

We now explain how to construct theta functions, and what is special
about the canonical scattering diagram. We first recall the notion
of broken line, fixing here a scattering diagram $\foD$ and a co-Artinian
ideal $I\subseteq\kk[P]$.

\begin{definition}
A \emph{broken line} $\gamma$ in $(B,\Sigma)$ for $q\in B_0(\ZZ)$
and endpoint $Q\in B_0$ is a proper continuous piecewise integral
affine map $\gamma:(-\infty,0]\rightarrow B_0$, real numbers
$t_0=-\infty< t_1<\cdots <t_n=0$, and monomials $m_i$, $1\le i\le n$,
satisfying the following properties:
\begin{enumerate}
\item $\gamma(0)=Q$.
\item $\gamma|_{[t_{i-1},t_i]}$ is affine linear for all $i$, and 
$\gamma([t_{i-1},t_i])$ is contained
in some two-dimensional cone $\sigma_{j,j+1}\in\Sigma$, where $j$
depends on $i$. Further,
$m_i=c_iX_j^aX_{j+1}^b$ for some $a,b\in\ZZ$, $a,b$ not both zero,
$c_i\in \kk[P]/I$, and $\gamma'(t)=-av_j-bv_{j+1}$ for any $t\in
(t_{i-1},t_i)$.
\item If $q\in \Int(\sigma_{j,j+1})$ for some $j$, we can write 
$q=av_j+bv_{j+1}$ for some $a,b\in \ZZ_{\ge 0}$, and then
$\gamma((-\infty,t_1])\subset \sigma_{j,j+1}$ and
$m_1=X_j^aX_{j+1}^b$. If $q\in \rho_j$ for some $j$, then
$\gamma((-\infty,t_1])$ is either contained in $\sigma_{j-1,j}$ or
$\sigma_{j,j+1}$, and writing $q=av_j$, we have $m_1=X_j^a$.
\item If $\gamma(t_i)$ lies in the interior of a maximal cone of
$\Sigma$ then $\gamma(t_i)$ lies in the support of a ray $(\fod,f_{\fod})$
and $\gamma$ passes from one side of $\fod$ to the other, so that
$\theta_{\gamma,\fod}$ is defined.
Then $m_{i+1}$ is a monomial in $\theta_{\gamma,\fod}(m_i)$. In other
words, we expand the expression $\theta_{\gamma,\fod}(m_i)$ into
a sum of monomials, and choose $m_{i+1}$ to be one of the terms of this
sum.
\item If $\gamma(t_i)\in\rho_j$ for some $j$, $\gamma$ passes
from $\sigma_{j-1,j}$ to $\sigma_{j,j+1}$, and
$m_i=c_iX_{j-1}^aX_j^b$, then
$m_{i+1}$ is a monomial in the expression
\[
c_i(z^{[D_j]}X_j^{-D_j^2}f_{\rho_j}X_{j+1}^{-1})^{a} X_j^{b}.
\]
If, on the other hand, $\gamma$ passes from $\sigma_{j,j+1}$ to
$\sigma_{j-1,j}$, then, with $m_i=c_iX_j^aX_{j+1}^b$, $m_{i+1}$ is
a monomial in the expression
\[
c_iX_j^a(z^{[D_j]}X_j^{-D_j^2}f_{\rho_j}X_{j-1}^{-1})^b.
\]
In other words, in the first case, the monomial $m_i$, written
in the variables $X_{j-1}, X_j$, is rewritten, using the defining
equation of the ring $R_{j,I}$, in the variables $X_j, X_{j+1}$.
The second case is similar.
\end{enumerate}
\end{definition}

\begin{definition}
Let $q\in B_0(\ZZ)$ and
$Q\in B_0$ be a point with irrational coordinates. Then we define
\[
\vartheta_{q,Q}=\sum_{\gamma} \Mono(\gamma)
\]
where the sum is over all broken lines for $q$ with endpoint $Q$,
and $\Mono(\gamma)$ denotes the last monomial attached to $\gamma$.

We extend this definition to $q=0\in B(\ZZ)\setminus B_0(\ZZ)$ by
setting 
\[
\vartheta_{0,Q}=\vartheta_0=1.
\]
\end{definition}

It follows from the definition of broken line that if $Q\in\sigma_{i,i+1}$
then $\vartheta_{q,Q}\in R_{i,i+1,I}$. 

\begin{definition}
We say $\foD$ is \emph{consistent} if for all $q\in B_0(\ZZ)$ and co-Artinian
ideals $I$,
\begin{enumerate}
\item If $Q,Q'\in \sigma_{i,i+1}$ are points with irrational coordinates
and $\gamma$ is a path in $\sigma_{i,i+1}$ joining $Q$ to $Q'$, then
\[
\theta_{\gamma,\foD}(\vartheta_{q,Q})= \vartheta_{q,Q'}.
\]
\item
If $Q\in\sigma_{i-1,i}$, 
$Q'\in \sigma_{i,i+1}$ are chosen sufficiently close to $\rho_i$ 
such that there is no non-trivial ray
of $\foD_I$ between $Q$ and $\rho_i$ or between $Q'$ and $\rho_i$,
then there exists an element $\vartheta_{q,\rho_i}\in R_{i,I}$ whose images
in $R_{i-1,i,I}$ and $R_{i,i+1,I}$ are $\vartheta_{q,Q}$ and $\vartheta_{q,Q'}$
respectively.
\end{enumerate}
\end{definition}

One of the main theorems of \cite{GHK11}, namely Theorem 3.8, states that
$\foD_{\can}$ is a consistent scattering diagram.

The benefit of a consistent scattering diagram is that the $\vartheta_{q,Q}$
for various $Q$ can then be glued to give a global function
$\vartheta_q\in \Gamma(X_{I,\foD}^{\circ},\shO_{X_{I,\foD}^{\circ}})$.
This allows us to construct a partial compactification $X_{I,\foD}$ of
$X^{\circ}_{I,\foD}$ by setting
\[
X_{I,\foD}:=\Spec\Gamma(X_{I,\foD}^{\circ},\shO_{X_{I,\foD}^{\circ}}),
\]
and the existence of the theta functions $\vartheta_q$ guarantees that
this produces a flat deformation of $\VV_n$ over $\Spec A_I$,
see \cite{GHK11}, \S2.3 for details.

Morally, another way to think about this is that we are embedding 
$X_{I,\foD}^{\circ}$ in $\AA^n_{A_I}$ using the theta functions
$\vartheta_{v_1},\ldots,\vartheta_{v_n}$, and then taking the closure.

\begin{example}
Unfortunately, in the cubic surface example, it is very difficult to
write down expressions for theta functions. While for any fixed ideal
$I$, $\theta_{q,Q}$ is a finite sum of monomials, in fact if we take
the limit over all $I$ we obtain an infinite sum. Here are
some very simple examples of this. Take $Q=\alpha v_1+\beta v_2$
for some irrational $\alpha,\beta\in \RR_{>0}$, and take $q=v_1$. We give
examples of broken lines for $q$ ending at $Q$. Consider
a ray $\fod=\RR_{\ge 0}(a v_1+b v_2)$ with $(a-1)/b<\alpha/\beta<a/b$. Note
that given the choice of $Q$, there are an infinite number of
choices of relatively prime $a,b$ satisfying this condition.

We will construct a broken line as depicted in Figure \ref{Figure2}. The
monomial attached to the segment coming in from infinity is $X_1$.
If we bend along the ray $\fod$, we apply $\theta_{\gamma,\fod}$ to
$X_1$.  By Theorem \ref{thm:scat}, $f_{\fod}$ contains a non-zero term 
$cX_1^{-a}X_2^{-b}$ for some $c\in\kk[P]$,
so $\theta_{\gamma,\fod}(X_1)$ contains a term $c'X_1^{1-a}X_2^{-b}$.
Choosing this monomial, we now proceed in the direction $(a-1,b)$.
In particular, take the bending point to be
\[
Q':=\left({a\over b}((1-a)\beta+b\alpha), (1-a)\beta+b\alpha\right),
\]
which lies on $\fod$ because $(1-a)\beta+b\alpha>0$ by the assumption
that $(a-1)/b<\alpha/\beta$. Then 
\[
Q-Q'=({a\over b}\beta-\alpha)(a-1,b).
\]
Thus the broken line reaches $Q$, as depicted.
So we have indeed constructed a broken line, and there are 
an infinite number of such broken lines (albeit only a finite number
modulo any co-Artinian ideal $I$). 
\begin{figure}
\input{Figure2.pstex_t}
\caption{}
\label{Figure2}
\end{figure}

Of course, here we are using only one term from the infinite power
series expansion of $f_{\fod}$ and only considering one possible
bend, and we already have an infinite number of broken lines. 
We believe it would be extremely difficult to get a useful description
of all broken lines, and hence broken lines provide a useful theoretical,
but not practical, description of theta functions.
\end{example}

This shows that if we take the limit over all $I$ and obtain a formal scheme
$\widehat{\foX}\rightarrow 
\Spf \widehat{\kk[P]}$, there is no hope to express the theta
functions as algebraic expressions. Thus it is perhaps a bit of a surprise
that often the relations satisfied by these theta functions are much
simpler, so that we can extend the construction over $\Spec \kk[P]$.
In the case of the cubic surface, we will see this explicitly by
using the product rule for theta functions in Theorem 2.34 of \cite{GHK11}:

\begin{theorem}
\label{multrule}
Let $p_1,p_2 \in B(\bZ)$. In the
canonical expansion
\[
\vartheta_{p_1} \cdot \vartheta_{p_2} =
\sum_{r \in B(\bZ)} \alpha_{p_1p_2r} \vartheta_r,
\]
where $\alpha_{p_1p_2r}\in \kk[P]/I$ for each $q$, we have
\[
\alpha_{p_1p_2r} = \sum_{\gamma_1,\gamma_2} c(\gamma_1)c(\gamma_2),
\]
where the sum and notation is as follows. We fix
$z\in B_0$ a point very close to $r$ contained in the interior of
a cone $\sigma_{i,i+1}$ for some $i$. We then sum over all broken
lines $\gamma_1$, $\gamma_2$ for $p_1$, $p_2$ satisfying: (1) Both 
broken lines have endpoint $z$. (2)
If $\Mono(\gamma_j)=c(\gamma_j)X_i^{a_j}X_{i+1}^{b_j}$ with
$c(\gamma_j)\in \kk[P]/I$, $j=1,2$, then
$r=(a_1+a_2)v_i+(b_1+b_2)v_{i+1}$.
\end{theorem}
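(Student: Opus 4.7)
The plan is to use consistency of $\foD$ together with the fact that the theta functions form an $A_I$-module basis of $\Gamma(X^\circ_{I,\foD},\shO)$ (established separately in \cite{GHK11}) to reduce the identification of $\alpha_{p_1p_2r}$ to a coefficient-matching problem in a single chart. Consistency immediately implies that $\vartheta_{p_1}$ and $\vartheta_{p_2}$, and hence their product, glue to global sections on $X^\circ_{I,\foD}$; the basis property then yields a unique expansion
\[
\vartheta_{p_1}\vartheta_{p_2}=\sum_r\alpha_{p_1p_2r}\vartheta_r,
\]
and what remains is to identify these coefficients combinatorially.

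Fix $r\in B_0(\ZZ)$ lying in $\Int(\sigma_{i,i+1})$ and write $r=av_i+bv_{i+1}$; the boundary case $r\in\rho_i$ is handled analogously, working in the chart $R_{i,I}$ and using the crossing rule (5) in the definition of a broken line. Choose $z$ generic and very close to $r$ inside $\Int(\sigma_{i,i+1})$, and restrict both sides of the expansion to the chart $R_{i,i+1,I}$. By the very definition of theta functions via broken lines,
\[
\vartheta_{p_1,z}\cdot\vartheta_{p_2,z}=\sum_{\gamma_1,\gamma_2}c(\gamma_1)c(\gamma_2)X_i^{a_1+a_2}X_{i+1}^{b_1+b_2},
\]
and the combinatorial expression claimed in the theorem for $\alpha_{p_1p_2r}$ is precisely the coefficient of $X_i^aX_{i+1}^b$ in this Laurent polynomial. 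Matching this coefficient against the corresponding coefficient in the right-hand side $\sum_{r'}\alpha_{p_1p_2r'}\vartheta_{r',z}$ reduces the theorem to the following triangularity statement: for $z$ generic and sufficiently close to $r$, the coefficient of $X_i^aX_{i+1}^b$ in $\vartheta_{r',z}$ equals $\delta_{r,r'}$.

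The triangularity is the main technical obstacle. The unique single-segment broken line for $r'=r$ contributes $X_i^aX_{i+1}^b$ with coefficient $1$ and supplies the diagonal entry. Any other broken line $\gamma$ whose final monomial has exponent $(a,b)$ has its last segment travelling in direction $-(av_i+bv_{i+1})$, so just before the endpoint it traces a subset of the half-line $H:=z+\RR_{\geq 0}(av_i+bv_{i+1})$. An elementary parametric calculation in $\RR^2$ (solving $z+s(a,b)=t(a'',b'')$ with $s,t\geq 0$) shows that for $z$ generic and close enough to $r=av_i+bv_{i+1}$, $H$ is disjoint from every scattering ray of $\foD_I$ whose direction is not parallel to $(a,b)$: the linear system forces $1+s$ to be a small multiple of $\epsilon$, hence $s<0$. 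A bend on a ray parallel to $(a,b)$ is excluded as well: the formula for $\theta_{\gamma,\fod}$ forces the incoming monomial exponent to be $(k+1)(a,b)$ for some $k\geq 1$, so the incoming direction is already parallel to $-(av_i+bv_{i+1})$, and $\gamma$ then travels along (rather than crosses) $\fod$, contradicting the bending hypothesis. Hence the last segment of $\gamma$ admits no bend, $\gamma$ itself is a single straight segment from infinity, and this is possible only when $r'=r$. This establishes the triangularity and therefore the theorem.
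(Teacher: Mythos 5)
The paper does not actually prove this statement: it is quoted from \cite{GHK11}, Theorem 2.34, so there is no in-paper argument to compare against. Your proof is essentially the standard one given there (and in \cite{GHKK}, \S 6): consistency plus the $A_I$-basis property of the theta functions reduce everything to the triangularity claim that, for $z$ generic and sufficiently close to $r=av_i+bv_{i+1}$, the only broken line contributing an $X_i^aX_{i+1}^b$-term to $\vartheta_{r',z}$ is the straight line for $r'=r$, and your half-line/parametric argument for that is correct. Two small points worth making explicit: the disjointness of $H=z+\RR_{\geq 0}(a,b)$ from non-parallel rays can only be arranged for the \emph{finitely many} rays of $\foD_I$ (nearly parallel rays far from the origin will meet $H$), which suffices precisely because trivial rays cause no bending; and the cases $r=0$ (where one uses that no broken line carries a monomial of exponent $(0,0)$, by Definition 3.1(2)) and the exclusion of a boundary crossing at $t_{n-1}$ (the backward extension of the last segment never leaves $\Int(\sigma_{i,i+1})$) should be recorded, though both are immediate.
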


We shall see that in the case of the cubic surface, only a very small
part of $\foD_{\can}$ is necessary to find the equation of the mirror.

There is one more ingredient for the calculation of the
equation, namely the notion of a \emph{min-convex function} in the context
of a scattering diagram. Let $F:B\rightarrow\RR$ be a piecewise
linear function on $B$. If $\gamma$ is a broken line, we obtain a
(generally discontinuous) function on $(-\infty,0]$, the domain of $\gamma$,
written as $t\mapsto dF(\gamma'(t))$. This means that at a time $t$,
provided $F$ is linear at $\gamma(t)$, we evaluate the differential
$dF$ at $\gamma(t)$ on the tangent vector $\gamma'(t)$. Thus
$dF(\gamma'(\cdot))$ is a piecewise constant function.

We say $F$ is \emph{min-convex} if for any broken line $\gamma$,
$dF(\gamma'(\cdot))$ is a decreasing function: see \cite{GHKK},
Definition 8.2, where the definition is given in a slightly different
context. The use of such a function is that \cite{GHKK}, Lemma 8.4
applies, so that $F$ is \emph{decreasing} in the sense of
\cite{GHKK}, Definition 8.3., i.e., if the coefficient $\alpha_{p_1p_2r}
\not=0$, then 
\begin{equation}
\label{eq:decreasing}
F(r)\ge F(p_1)+F(p_2).
\end{equation}
Indeed, suppose that $\gamma_1$, $\gamma_2$ are broken lines
for $p_1,p_2$ respectively contributing to the expression for
$\alpha_{p_1p_2r}$. Note that $F(p_i)=-dF(\gamma_i'(t))$ for $t\ll 0$, while
$r=-\gamma_1'(0)-\gamma'_2(0)$. Thus for $t\ll 0$, 
\begin{equation}
\label{eq:precise decreasing}
F(r)-F(p_1)-F(p_2)=\sum_{i=1}^2(dF(\gamma_i'(t))-dF(\gamma_i'(0))),
\end{equation}
which is positive under the decreasing assumption.
 
We note \cite{B18} also makes use of such a function (with the opposite sign
convention). Barrott, however, used a computer program to enumerate
all contributions to the products, as his main goal was to find the
mirror to a degree $2$ del Pezzo, which has a considerably more complex
equation than the mirror to the cubic. In the case of the cubic surface,
the products can be computed by hand.

In our case, we may take $F=\langle K_Y,\cdot\rangle$.
Note this
pulls back to the PL function on the cover $\RR^2\rightarrow B=\RR^2/\langle
-\id\rangle$ which corresponds to $K_{\widetilde Y}$.

It is easy to check that in fact $dF(\gamma'(\cdot))$ decreases whenever a broken
line crosses one of the rays $\rho_i$ (this is just local convexity
of $F$) or when a broken line bends, and hence $F$ is decreasing.
However, it will be important to quantify by how much $dF(\gamma'(\cdot))$
changes with each of these occurences. For example, suppose a broken line
passes from $\sigma_{1,2}$ into $\sigma_{2,3}$ without bending, with
tangent direction $a v_1+bv_2$, necessarily with $a<0$. Then via parallel
transport of this tangent vector into $\sigma_{2,3}$, we can rewrite 
the vector using the relation $v_1+v_3=-D_2^2 v_2$, i.e.,
$a v_1 + b v_2$ is rewritten as $(a+b) v_2 -a v_3$. Thus
$dF(\gamma'(\cdot))$ takes the value $-(a+b)$ before crossing $\rho_2$,
and the value $-(a+b)+a$ after crossing $\rho_2$, hence decreasing as
$a<0$.

If $\gamma$ bends in, say, $\sigma_{i,i+1}$, then $\gamma'$ changes
by some $av_i+bv_{i+1}\not=0$ for $a,b\ge 0$. But $dF(av_i+bv_{i+1})=
-a-b$, so $dF(\gamma'(\cdot))$ changes by $-(a+b)<0$.

Note that if $\gamma$ bends when it crosses $\rho_i$, in fact
$dF(\gamma'(t))$ decreases by at least $2$. These observations will
be crucial for bounding the search for possible broken lines contributing
to the product.

We now calculate the key products necessary to prove the main theorem.

\begin{lemma} 
\label{lem:products}
We have the following products:
\begin{align*}
\vartheta_{v_i}^2= {} & \vartheta_{2v_i}+2z^{D_j+D_k},\quad \{i,j,k\}=\{1,2,3\}
\\
\vartheta_{v_1}\vartheta_{v_2}= {} &\vartheta_{v_1+v_2}+z^{D_3}\vartheta_{v_3}
+\sum_{j=1}^8 z^{D_3+L_{3j}}\\
\vartheta_{v_1+v_2}\vartheta_{v_3} = {} &
z^{D_1}\vartheta_{2v_1}+z^{D_2}\vartheta_{2v_2}+
\vartheta_{v_1}\sum_j z^{D_1+L_{1j}}+\vartheta_{v_2} \sum_j z^{D_2+L_{2j}}
+\sum_\pi z^{\pi^*H}+8z^{D_1+D_2+D_3}.
\end{align*}
\end{lemma}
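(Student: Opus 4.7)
The plan is to apply the broken-line product formula of Theorem~\ref{multrule} to each of the three products in turn. For each product, I first use the decreasing property~\eqref{eq:decreasing} of the min-convex function $F=\langle K_Y,\cdot\rangle$ (which satisfies $F(v_i)=-1$ for all $i$) to restrict which $r\in B(\ZZ)$ can appear, and then the quantitative form~\eqref{eq:precise decreasing} to bound the complexity of broken lines that can realize each $r$: every bending event at an interior scattering ray or nontrivial $\rho_j$-crossing decreases $dF(\gamma'(\cdot))$ by a definite positive amount, so only broken lines with a bounded number of events contribute. Combined with the explicit formula of Proposition~\ref{prop:first ray} for $f_{\rho_i}$, with~\eqref{eq:v1v2ray} for the $\RR_{\ge 0}(v_1+v_2)$ ray, and with the $\SL_2(\ZZ)$-action of Theorem~\ref{thm:scat} which determines every other scattering function that occurs at the orders of $F$ we need, each contribution can then be read off monomial by monomial.

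For $\vartheta_{v_i}^2$: the bound $F(r)\ge-2$ restricts $r\in\{2v_i,0\}$. The coefficient of $\vartheta_{2v_i}$ comes from the unique pair of straight broken lines with monomial $X_i$, giving $1$. The coefficient of $\vartheta_0$ arises from pairs in which one broken line is straight and the other picks up the leading denominator term $z^{D_j+D_k}X_i^{-2}$ of $f_{\rho_i}$ after an appropriate $\rho_i$-crossing sequence; summing over the two symmetric choices of which broken line is straight and tracking multiplicities in $1/(1-u)^4$ gives the coefficient $2z^{D_j+D_k}$. For $\vartheta_{v_1}\vartheta_{v_2}$, the $\vartheta_{v_1+v_2}$ term comes from the straight-line pair; the $z^{D_3}\vartheta_{v_3}$ term from one broken line bending at $\RR_{\ge 0}(v_1+v_2)$ using a leading term of~\eqref{eq:v1v2ray}; and the $\sum_j z^{D_3+L_{3j}}$ constant piece from the linear $L_{3j}$-numerator factors in~\eqref{eq:v1v2ray} and in $f_{\rho_3}$.

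The main work is the third product $\vartheta_{v_1+v_2}\vartheta_{v_3}$, for which $F(p_1)+F(p_2)=-3$. After restricting using $F$-decreasing and the tangent-direction constraints on final monomials, the only possibilities are $r\in\{2v_1,2v_2,v_1,v_2,0\}$. The terms $z^{D_i}\vartheta_{2v_i}$ ($i=1,2$) come from pairs in which the $\vartheta_{v_3}$-broken line crosses $\rho_i$ picking up the $z^{[D_i]}$ factor from rule~(5), combined with a straight $\vartheta_{v_1+v_2}$-broken line; one checks directly that the resulting tangent directions and monomial exponents match. The terms $\vartheta_{v_i}\sum_j z^{D_i+L_{ij}}$ come from additionally picking up a numerator factor $z^{L_{ij}}X_i^{-1}$ from $f_{\rho_i}$ at the crossing. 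The constant ($r=0$) coefficient splits as $\sum_\pi z^{\pi^*H}+8z^{D_1+D_2+D_3}$: the second piece aggregates contributions from leading denominator terms across $f_{\rho_1},f_{\rho_2},f_{\rho_3}$ and~\eqref{eq:v1v2ray}, the coefficient~$8$ arising from the binomial multiplicities in these denominators together with the distinct broken-line configurations that can realize $r=0$; the first piece comes from pairs in which each broken line picks up compatible $L_{ij}$-numerator factors from the three $f_{\rho_i}$'s, whose combined curve classes, together with $D_1+D_2+D_3$, equal $\pi^*H$ for an admissible $\pi$.

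The main obstacle is this last enumeration: matching each broken-line configuration for $r=0$ to a specific birational morphism $\pi:Y\to\PP^2$ and verifying the multiplicity~$8$ in $8z^{D_1+D_2+D_3}$. Although the picture is conceptually clear --- the six $L_{ij}$ picked up by a compatible configuration are exactly the six $(-1)$-curves contracted by $\pi$ --- the verification requires careful combinatorial bookkeeping using the $\SL_2(\ZZ)$-symmetry of Theorem~\ref{thm:scat}, the genericity assumption (no Eckardt points) used in Proposition~\ref{prop:first ray}, and classical facts about $(-1)$-curves on the cubic surface. With the lemma in hand, Theorem~\ref{thm:main theorem} follows by expanding $\vartheta_{v_1}\vartheta_{v_2}\vartheta_{v_3}$ via the second and third identities of the lemma and using the first to eliminate the $\vartheta_{2v_i}$'s.
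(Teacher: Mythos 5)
Your overall framework --- Theorem \ref{multrule} together with the min-convex function $F$ to bound $F(r)$ and the number of bending events --- is exactly the paper's, and your treatment of $\vartheta_{2v_i}$, $\vartheta_{v_1+v_2}$, $z^{D_i}\vartheta_{2v_i}$ and $\vartheta_{v_i}\sum_j z^{D_i+L_{ij}}$ is correct. But several of the specific broken-line mechanisms you propose are wrong and would not produce the stated coefficients. First, for the constant term $2z^{D_j+D_k}$ in $\vartheta_{v_i}^2$: a broken line for $v_i$ with monomial $X_i$ travels parallel to $\rho_i$ and never crosses it, and in any case the wall-crossing automorphism attached to a ray fixes monomials proportional to the ray's direction; so no configuration can ``pick up the leading denominator term $z^{D_j+D_k}X_i^{-2}$ of $f_{\rho_i}$'' (which would moreover contribute with coefficient $4$, not $2$). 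The actual configuration is a pair of \emph{straight} broken lines coming from the two opposite lifts of $\rho_i$ on the cover $\RR^2$, one of which crosses $\rho_j$ and $\rho_k$ and picks up $z^{D_j}z^{D_k}$ from the coordinate-rewriting rule (5); the factor $2$ is the interchange of the labels $\gamma_1,\gamma_2$. Second, for $z^{D_3}\vartheta_{v_3}$ in $\vartheta_{v_1}\vartheta_{v_2}$: a bend at $\RR_{\ge 0}(v_1+v_2)$ drops $dF(\gamma'(\cdot))$ by at least $2$, which starting from $F(p_1)+F(p_2)=-2$ forces $F(r)=0$, i.e.\ $r=0$; so this term cannot arise from such a bend. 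It arises with $r=v_3$ from a straight broken line crossing $\rho_3$, while the bend at $\RR_{\ge 0}(v_1+v_2)$ using the numerator of \eqref{eq:v1v2ray} is what produces the constant term $\sum_j z^{D_3+L_{3j}}$ (and only that ray is involved, not $f_{\rho_3}$).

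Most seriously, your account of the constant term of $\vartheta_{v_1+v_2}\vartheta_{v_3}$ is both incorrect and incomplete. Placing $z$ near the origin close to $\rho_1$, the $F$-bookkeeping forces the $v_1+v_2$ broken line to be straight and the $v_3$ broken line to cross $\rho_1$ exactly once with no other bends; the entire coefficient is then $z^{D_1}$ times the coefficient of $X_1^{-2}$ in $f_{\rho_1}$, namely $\sum_{1\le j<j'\le 8}z^{D_1+L_{1j}+L_{1j'}}+4z^{D_1+D_2+D_3}$. No terms from $f_{\rho_2}$, $f_{\rho_3}$ or \eqref{eq:v1v2ray} enter. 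The identification with $\sum_\pi z^{\pi^*H}+8z^{D_1+D_2+D_3}$ is then classical geometry of the cubic surface: $D_1+L_{1j}+L_{1j'}=\pi^*H$ exactly when $L_{1j}\cap L_{1j'}=\emptyset$, while the four coplanar pairs give $D_1+L_{1j}+L_{1j'}\sim D_1+D_2+D_3$, whence $8=4+4$. Your proposed mechanism (each broken line picking up compatible $L_{ij}$-factors from all three $f_{\rho_i}$, with the resulting classes summing with $D_1+D_2+D_3$ to $\pi^*H$) fails already on degree grounds, since $\pi^*H$ and $D_1+D_2+D_3$ both have degree $3$ against $D$. Finally, you leave precisely this coefficient --- the crux of the lemma --- as an unverified ``obstacle,'' so the argument is not complete as written.
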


\begin{proof}
We consider first $\vartheta_{v_i}^2$. By symmetry, we can take
$i=1$. Since $F(v_1)=-1$, if
$\vartheta_r$ contributes to this product, we must have
$-2\le F(r) \le 0$, the first inequality from \eqref{eq:decreasing}
and the second since $F$ is non-positive. Let $\gamma_1$, $\gamma_2$ be
broken lines contributing to $\alpha_{v_1v_1r}$ as in Theorem
\ref{multrule}.
It follows immediately from
\eqref{eq:precise decreasing} that if $F(r)=-2$, then
$\gamma_i$ neither bends nor crosses a wall. It is then obvious
the only possible $r$ in this case is $r=2v_1$. 
Fixing $z\in \Int(\sigma_{1,2})$
near $2v_1$, we obtain the contribution from two broken lines as in the left
in Figure \ref{figurev12}: this is responsible for the $\vartheta_{2v_1}$
term.

There are only three points $r\in B(\ZZ)$ with $F(r)=-1$, 
namely $r=v_i$, $i=1,2,3$.
Now if a pair of broken lines $\gamma_1$, $\gamma_2$ contributes to 
$\alpha_{v_1v_1r}$, then one of the $\gamma_j$ either bends or
crosses one of the $\rho_k$. Such a possibility can be ruled out, however.
It is easiest to work on the cover $\RR^2\rightarrow B=\RR^2/
\langle-\id\rangle$,
bearing in mind that there are two possible initial directions for the
lifting of a
broken line $\gamma_i$, namely it can come in parallel to $\RR_{\ge 0}(1,0)$
or parallel to $\RR_{\ge 0}(-1,0)$.
If $r=v_2$ or $v_3$, we can fix $z$ in the interior of $\sigma_{2,3}$,
and then both broken lines must cross rays to reach $z$. If $r=v_1$, we
may take $z$ in the interior of $\sigma_{1,2}$, and then the only possibility
is that one of the $\gamma_i$ bends. However, if $\gamma_i$ bends 
at any ray of $\foD_{\can}$ not supported on one of the $\rho_i$, then
$dF$ decreases by at least $2$, ruling out this possibility. Thus
we can rule out the case $F(r)=-1$. We shall omit this kind of analysis in the 
sequel, as it is straightforward.

Finally, if $F(r)=0$, then $r=0$. Taking $z$ in $\sigma_{1,2}$ close to
the origin, we obtain the possibility shown on the right-hand side
of Figure \ref{figurev12}. This actually represents two possibilities,
as the labels $\gamma_1$ and $\gamma_2$ can be interchanged. Each such pair
of broken lines contributes $z^{D_2+D_3}\vartheta_0$, recalling that
$\vartheta_0=1$. One checks easily that there are no possibilities
where one of the broken lines bends. This gives the claimed description
of $\vartheta_{v_1}^2$.

\begin{figure}
\input{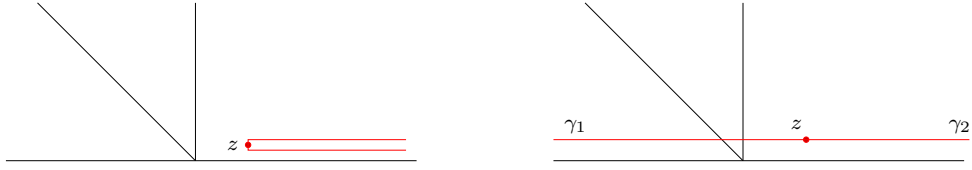}
\caption{The contributions to the product $\vartheta_{v_1}^2$.
In the left-hand picture, the two broken lines in fact lie on top of
each other, but we depict them as distinct lines with endpoint $z$.}
\label{figurev12}
\end{figure}

\medskip

Turning to $\vartheta_{v_1}\cdot\vartheta_{v_2}$, if $F(r)=-2$, then
again broken lines can't bend or cross walls. In this case, the only
possibility is as depicted on the left in Figure \ref{figurev1v2},
contributing the term $\vartheta_{v_1+v_2}$.

If $F(r)=-1$, then $r=v_1,v_2$ or $v_3$. By putting the endpoint
$z$ in $\sigma_{1,2}, \sigma_{2,3}$ or $\sigma_{2,3}$ respectively,
a quick analysis shows the only possible contribution is from the right-hand
picture in Figure \ref{figurev1v2}, contributing $z^{D_3}\vartheta_{v_3}$.

\begin{figure}
\input{figurev1v2.pstex_t}
\caption{}
\label{figurev1v2}
\end{figure}

\begin{figure}
\input{figurev1v2two.pstex_t}
\caption{}
\label{figurev1v2two}
\end{figure}

Finally, if $F(r)=0$, again $r=0$. Taking $z$ near $\rho_1$ and the origin 
in the
interior of $\sigma_{1,2}$, we now obtain the possibility of $\gamma_2$
bending along the ray $\fod=\RR_{\ge 0}(1,1)$ as depicted in Figure
\ref{figurev1v2two}. The bend on $\gamma_2$ is calcluated by seeing
how $\theta_{\fod,\gamma_2}$ acts on the initial monomial $X_2$,
i.e., $X_2\mapsto X_2f_{\fod}$. By the form given for $f_{\fod}$ in 
\eqref{eq:v1v2ray}, we get the given expression for $\vartheta_{v_1}
\cdot \vartheta_{v_2}$.

\medskip

Turning to $\vartheta_{v_1+v_2}\cdot \vartheta_{v_3}$, this time
we have the possible range $-3\le F(r)\le 0$. However, $F(r)=-3$
is impossible, as this does not allow either $\gamma_i$ to cross a
ray $\rho_j$, and necessarily $\gamma_1$ and $\gamma_2$ come in from
infinity in different cones.

If $F(r)=-2$, as at least one of the $\gamma_i$ crosses a ray $\rho_j$,
no bends are possible. One then sees the two possibilities in
Figure \ref{figurev12v3one}. These give rise to the contributions
$z^{D_2}\vartheta_{2v_2}$ and $z^{D_1}\vartheta_{2v_1}$ respectively.

\begin{figure}
\input{figurev12v3one.pstex_t}
\caption{}
\label{figurev12v3one}
\end{figure}

If $F(r)=-1$, then a bend is also permitted, and $r=v_1,v_2$ or $v_3$.
By placing $z$ in the interiors of $\sigma_{1,2}$, $\sigma_{1,2}$
or $\sigma_{2,3}$ respectively, near $\rho_1$, $\rho_2$, or $\rho_3$,
one rules out $v_3$ as a possibility and has as remaining possibilities
as in Figure \ref{figurev12v3two}. These contribute
$\vartheta_{v_1}\sum_j z^{D_1+L_{1j}}$ and $\vartheta_{v_2}\sum_j 
z^{D_2+L_{2j}}$ respectively.

\begin{figure}
\input{figurev12v3two.pstex_t}
\caption{}
\label{figurev12v3two}
\end{figure}

Finally we have $F(r)=0$, i.e., $r=0$. We put $z$ in the interior
of $\sigma_{1,2}$
near the origin, close to $\rho_1$. Then $\gamma_1$ stays in the interior
of $\sigma_{1,2}$, and therefore
can only bend at a ray of $\foD_{\can}$ intersecting the interior
of $\sigma_{1,2}$. However, if it bends at any ray other than
$\fod=\RR_{\ge 0}(1,1)$, $dF$ decreases by at least $3$, while
$\gamma_2$ crosses some $\rho_i$, which would require $F(r)\ge 1$. 
On the other hand, as $\gamma_1$ is initially parallel to $\fod$,
it can't cross $\fod$, and hence $\gamma_1$ doesn't bend. This leaves
only the possibility depicted in Figure \ref{figurev12v3three}.
This involves rewriting $X_3$ using the equation
$X_2X_3=f_{\rho_1} z^{[D_1]} X_1$, i.e., $X_3=f_{\rho_1}z^{[D_1]}X_1X_2^{-1}$
and choosing a monomial of the form $cX_1^{-1}X_2^{-1}$ from this
expression. We thus need to consider the coefficient of $X_1^{-2}$
in $f_{\rho_1}$, and this is $\sum_{1\le j< j'\le 8} z^{L_{1j}+L_{1j'}}
+4 z^{D_2+D_3}$. Thus we get a contribution of
\begin{equation}
\label{eq:first version}
\sum_{1\le j< j' \le 8} z^{D_1+L_{1j}+L_{1j'}}+4z^{D_1+D_2+D_3}
\end{equation}
to the product. We can give a clearer description of this expression,
however. Consider the class $L_{1j}+L_{1j'}$. If $L_{1j}\cap L_{1j'}
=\emptyset$, then $L_{1j}$ and $L_{1j'}$ can be simultaneously contracted.
One can easily check that given this choice of $j, j'$, there are unique
pairs $L_{2k}, L_{2k'}$ and $L_{3\ell}, L_{3\ell'}$ such that all
six of these curves can be simultaneously contracted to give a morphism
$\pi:(Y,D)\rightarrow (Y',D')$, where $Y'\cong \PP^2$ and $D'$ is the
image of $D$. This morphism is in fact induced by the two-dimensional
linear system $|D_1+L_{1j}+L_{1j'}|$. In particular, 
$D_1+L_{1j}+L_{1j'}=\pi^*H$ where $H$ is the class of a line on $Y'$. 

On the other hand, a plane in $\PP^3$ containing both $D_1$ and $L_{1j}$
contains a third line $L_{1j'}$ for some $j'$ with $L_{1j}\cap L_{1j'}$
a point. Thus the set $\{L_{1j}\}$ is partitioned into four pairs,
with $L_{1j}$, $L_{1j'}$ in the same pair if $L_{1j}\cap L_{1j'}\not=\emptyset$,
in which case $D_1+L_{1j}+L_{1j'}\sim D_1+D_2+D_3$. Thus we can express
\eqref{eq:first version} as
\[
\sum_\pi z^{\pi^*H}+8z^{D_1+D_2+D_3}.
\]
This is responsible for the last contribution to $\vartheta_{v_1+v_2}
\vartheta_{v_3}$.
\end{proof}

\begin{figure}
\input{figurev12v3three.pstex_t}
\caption{}
\label{figurev12v3three}
\end{figure}

\begin{proof}[Proof of Theorem \ref{thm:main theorem}.]
Using the lemma, we calculate
\begin{align*}
\vartheta_{v_1}\vartheta_{v_2}\vartheta_{v_3}= {} &
\left(\vartheta_{v_1+v_2}+z^{D_3}\vartheta_{v_3}+\sum_{j=1}^8 z^{D_3+L_{3j}}
\right) \vartheta_{v_3}\\
= {} &z^{D_1}\vartheta_{2v_1}+z^{D_2}\vartheta_{2v_2}+z^{D_3}\vartheta_{v_3}^2
+\sum_i\left(\sum_jz^{L_{ij}}\right)z^{D_i}\vartheta_{v_i}
+\sum_{\pi} z^{\pi^*H} + 8z^{D_1+D_2+D_3}\\
= {} &z^{D_1}\vartheta_{v_1}^2+z^{D_2}\vartheta_{v_2}^2+z^{D_3}\vartheta_{v_3}^2
+\sum_i\left(\sum_jz^{L_{ij}}\right)z^{D_i}\vartheta_{v_i}
+\sum_{\pi} z^{\pi^*H} + 4z^{D_1+D_2+D_3},
\end{align*}
as desired.
\end{proof}

\begin{remark}
We have constructed a family of cubic surfaces over $S=\Spec\kk[P]$
where $P=\NE(Y)$, where $Y$ is a non-singular cubic surface. 
There is an intriguing slice of this family
related to the Cayley cubic which has already made its appearance
in Remark \ref{rem:cayley cubic}.

We may obtain the Cayley cubic as follows. Take four general lines
$L_1,\ldots,L_4$ in $\PP^2$, giving $6$ pairwise intersection points.
By blowing up these six points, we obtain a surface
$Y$. Note that the cone of effective curves of $Y$ is different
than that of a general cubic surface because it contains some $(-2)$-curves.
However, for this discussion it is convenient to keep $P$ to be the
cone of effective curves of a general cubic surface.

The strict transforms
of the four lines become disjoint $(-2)$-curves which may be contracted,
giving a cubic surface $Y'$ with four ordinary double points: this is
the Cayley cubic.

If we take $\overline D_1$ to be the line joining $L_1\cap L_2$ and
$L_3\cap L_4$, $\overline D_2$ the line joining $L_1\cap L_3$ and
$L_2\cap L_4$, and $\overline D_3$ the line joining 
$L_1\cap L_4$ and $L_2\cap L_3$, and let $D_i$ be the strict transform
of $\overline D_i$ in $Y$, we obtain a log Calabi-Yau pair
$(Y,D=D_1+D_2+D_3)$ as usual. With suitable labelling of the exceptional
curves, we can write the classes $F_i$ of the strict transforms of the
$L_i$ as
\begin{align*}
F_1:= {} & L-E_{11}-E_{21}-E_{31}\\
F_2:= {} & L-E_{11}-E_{22}-E_{32}\\
F_3:= {} & L-E_{12}-E_{21}-E_{32}\\
F_4:= {} & L-E_{12}-E_{22}-E_{31}
\end{align*}

Now consider the big torus $S^{\circ}=\Spec\kk[P^{\gp}]\subset
S=\Spec\kk[P]$, and consider further the subscheme 
$T\subseteq S^{\circ}$ defined by the equations $z^{F_i}=1$, $i=1,\ldots,4$.
Then $T=\Spec \kk[P^{\gp}/{\bf F}]$,
where ${\bf F}$ is the subgroup of $P^{\gp}$
generated by the curve classes $F_i$. However, it is not difficult to see
that this quotient has two-torsion. Indeed, 
\[
2E_{11}-2E_{12}=F_3+F_4-F_1-F_2\in {\bf F},
\]
while $E_{11}-E_{12}\not\in {\bf F}$. We also have
\[
E_{11}-E_{12}\equiv E_{21}-E_{22}\equiv E_{31}-E_{32} \mod
{\bf F}.
\]
In fact, $T$ has two connected components, 
one containing the identity
element in the torus $S^{\circ}$, and the other satisfying the equations
$z^{E_{i1}}=-z^{E_{i2}}$ for $i=1,2,3$. Let $T'$ denote this latter
component, and restrict the family of cubic surfaces given over $S$
in Theorem \ref{thm:main theorem} to $T'$. One may check that
the equation becomes
\begin{equation}
\label{eq:Cayley eq}
\vartheta_1\vartheta_2\vartheta_3=\sum_i z^{D_i}\vartheta_i^2-4z^{D_1+D_2+D_3}
\end{equation}
and that this is in fact a family of Cayley cubics.

Somewhat more directly, one may also consider the restriction of the
scattering diagram to $T'$. For example, consider the ray $(\rho_1,
f_{\rho_1})$ described in Proposition \ref{prop:first ray}.
We note that the set of eight lines ${\bf L}:=\{L_{1i}\}$ split into two groups
of four, 
\begin{align*}
{\bf L}_1:= {} &\{E_{11}, L-E_{21}-E_{31}, L-E_{22}-E_{32}, 2L-E_{11}
-E_{21}-E_{22}-E_{31}-E_{32}\}\\
{\bf L}_2:= {} & 
\{E_{12}, L-E_{21}-E_{32}, L-E_{22}-E_{31}, 2L-E_{12}
-E_{21}-E_{22}-E_{31}-E_{32}\}
\end{align*}
such that if $L_1,L_2\in {\bf L}$ lie in the same ${\bf L}_i$ then
$L_1-L_2\in {\bf F}$, so
$z^{L_1}=z^{L_2}$ on $T'$.
On the other hand, if they do not lie in the same
${\bf L}_i$, then $2(L_1-L_2)\in {\bf F}$ and
$z^{L_1}=-z^{L_2}$ on $T'$. Further, if $L\in {\bf L}$, then $2L-D_2-D_3
\in {\bf F}$.

Thus we obtain, after restriction to $T'$,
that, for any choice of $L\in {\bf L}$,
\[
f_{\rho_1}= {[(1+z^LX_1^{-1})(1-z^LX_1^{-1})]^4\over 
(1-z^{2L}X_1^{-2})^4} = 1.
\]
So the ray becomes trivial after restriction to $T'$. Following the
argument of \S2, one then sees all rays of $\foD_{\can}$ become
trivial after restriction to $T'$. Thus the equation \eqref{eq:Cayley eq}
may in fact be obtained
using broken lines as carried out in this section, 
but this time all broken lines involved are straight.

It is unusual that the trivial scattering diagram is consistent
when the affine manifold $B$ has a singularity. In the K3 case, 
there is a similar situation arising when $B$ is an affine two-sphere
arising as a quotient of $\RR^2/\ZZ^2$ via negation.

It is also intriguing that the relevant subtorus $T'\subseteq S^{\circ}$
is translated, i.e., does not pass through the origin; 
we speculate that this might have an explanation in terms
of orbifold Gromov-Witten invariants of the Cayley cubic.
\end{remark}

\section{The enumerative interpretation of the equation}

There is also a much more recent interpretation of the multiplication
law of Theorem \ref{multrule}, which gives a Gromov-Witten interpretation
for the $\alpha_{p_1p_2r}\in \kk[P]/I$, writing
\[
\alpha_{p_1p_2r}=\sum_{\beta\in P} N^{\beta}_{p_1p_2r} z^{\beta}
\]
with $N^{\beta}_{p_1p_2r}\in\QQ$.
In \cite{GS18}, Gross and Siebert explain how to associate certain
Gromov-Witten numbers to the data $\beta$, $p_1,p_2$ and $r$.
The details are given in \cite{GS19}. The paper
\cite{KY19} gives a different approach for the same ideas.
In general, the construction
of these invariants is quite subtle, but happily in the case at hand,
all invariants will be easy to calculate. Roughly put, the
numbers are defined as follows. 

The choice of $r$ defines a
choice of stratum $Z_r\subseteq Y$. Indeed, if $\sigma\in\Sigma$ is
the minimal cone containing $r$, then $\sigma$ corresponds to
a stratum of $Y$, i.e., if $r=0$, $Z_r=Y$, if $r\in\Int(\rho_i)$
then $Z_r=D_i$, and if $r\in\Int(\sigma_{i,i+1})$ then $Z_r=D_i\cap D_{i+1}$.
We choose a general point $z\in Z_r$. 

Then $N^{\beta}_{p_1p_2r}$
is a count of the number of stable logarithmic maps
$f:(C,x_1,x_2,x_{\out})\rightarrow Y$ such that:
\begin{enumerate}
\item  the order of tangency of $f$ at $x_j$ with $D_k$ is 
$\langle D_k, p_j\rangle$, $j=1,2$.
\item the order of tangency of $f$ at $x_{\out}$ with $D_k$
is $-\langle D_k,r\rangle$ and $f(x_{\out})=z$.
\end{enumerate}
Note this involves negative orders of tangency at $x_{\out}$, and
defining this is subtle. See \cite{GS18} and \cite{ACGS18} for more
details for how this notion is defined. Here, usually $r=0$, so we
will only have a couple of cases where we have to accept the possibility
of a negative order of tangency. For the complete technically correct
definition of the above invariant, see \cite{GS19}, \S3.

\cite{GS18}, Proposition 2.4 is quite useful in telling us when the
relevant moduli space is empty. In particular, that proposition
tells us that  $N^{\beta}_{p_1p_2r}\not=0$ implies that if $D'$ is
any divisor supported on $D$, then
\begin{equation}
\label{eq:balancing}
\beta\cdot D' = \langle D',p_1\rangle+\langle D',p_2\rangle - \langle D',
r\rangle.
\end{equation}
In particular, if we take $D'=D=-K_Y$, we obtain
\[
\langle D,p_1\rangle+\langle D,p_2\rangle=\beta\cdot D+\langle D,r\rangle.
\]
As $D$ is ample in the case of the
cubic surface, in particular $\beta\cdot D\ge 0$ for any effective curve class,
so we get the stronger result that if $N^{\beta}_{p_1p_2r}\not=0$, then
\[
\langle D,p_1\rangle+\langle D,p_2\rangle\ge\langle D,r\rangle\ge 0,
\]
These formulae may be compared with \eqref{eq:decreasing} and 
\eqref{eq:precise decreasing}.  In fact, $F=\langle K_Y,\cdot\rangle$, 
and the above formulae play the same role as those of \S3.

We now revisit the calculation of Lemma \ref{lem:products}. The arguments
which follow are necessarily sketchy as we have not given a full definition
of the invariants here. We trust the arguments should be sufficiently
plausible, however.

For example, let us reconsider the product $\vartheta_{v_1}^2$.
We see that if $N^{\beta}_{v_1v_1r}\not=0$ then
$\langle D,r \rangle\le 2$ with equality if and only if
$\beta=0$. Thus if we do have equality, then any map $f:(C,x_1,x_2,x_{\out})
\rightarrow Y$ contributing to $N^{\beta}_{v_1v_1r}$ is constant.
This is discussed in \cite{GS19}, Lemma 1.15, where
it is shown that if $N^0_{p_1p_2r}\not=0$, then $p_1,p_2$ lie in the same
cone and $r=p_1+p_2$. Further, $N^0_{p_1p_2r}=1$ in this case.
In particular, $N^0_{v_1v_1,2v_1}=1$. This gives the
contribution $\vartheta_{2v_1}$ to $\vartheta_{v_1}^2$.

If $\beta\cdot D=1$, then the only possibilities for $r$ are $v_1,v_2$ or
$v_3$. Suppose $r=v_1$. As $\beta\cdot D=1$, $\beta$ is the class of a 
line on $Y$. Since we choose $z\in Z_r$ general, none of
the lines $L_{ij}$ pass through $z$ and thus the image of $f$ may not
be $L_{ij}$. If the image of $f$ is $D_2$, then $f$ has non-trivial
contact with $D_3$, which is not allowed. Similarly, the image of $f$
may not be $D_3$. Finally, if the image of $f$ is $D_1$, 
\eqref{eq:balancing} yields a contradiction if one takes
$D'=D_1$. Thus we eliminate this case. The cases that $r=v_2,v_3$
are similarly ruled out.

Finally, we have one remaining case, when $\beta\cdot D=2$
and $r=0$. Thus we consider conics which meet $D_1$ transversally
at two points (labelled $x_1$, $x_2$), are disjoint from $D_2$ and
$D_3$, and have a third point $x_{\out}$ which coincides with a fixed
general point $z\in Y$. It is easy to see that any such conic must
be in the linear system $|D_2+D_3|$, and there is one such conic
passing through $z$. However, as the labels of the intersection points
of the conic with $D_1$ can be interchanged, in fact 
$N^{D_2+D_3}_{v_1v_20}=2$. This gives the second term in the
product $\vartheta_{v_1}^2$.

\medskip

We now move onto $\vartheta_{v_1}\cdot\vartheta_{v_2}$. A similar analysis
with the possible degree of the class $\beta$ leads to the following
choices. First, we may have $\beta=0$, and so $r=v_1+v_2$ and
$N^0_{v_1,v_2,v_1+v_2}=1$, giving the first contribution to the product.

Next, if $\beta\cdot D=1$, then $r=v_1, v_2$ or $v_3$. As before,
$\beta$ must be the class of a line, and as before, we must have
$\beta=D_i$ for some $i$ as otherwise the image of $f$ will not contain
$z$. If $\beta=D_3$, we can identify $C$ with $D_3$,
taking $x_1$ to be the intersection of $D_1$ and $D_3$ and
$x_2$ to be the intersection of $D_2$ with $D_3$. Since $\beta\cdot D_3=-1$,
\eqref{eq:balancing} tells us that $r=v_3$.
After fixing $z\in D_3$,
we take $x_{\out}=z$. One can show that $N^{D_3}_{v_1v_2v_3}=1$.\footnote{
We note that the full verification of this statement is somewhat
involved, as one must construct the unique punctured curve in the
relevant moduli space and show that it is unobstructed. However, this
is fairly routine for those familiar with log Gromov-Witten theory,
and we omit the details here as it would involve introducing a lot
of additional technology into this survey.}
This contributes
the term $z^{D_3}\vartheta_{v_3}$ to the product. On the other hand,
if $\beta=D_1$, taking $D'=D_1$ in \eqref{eq:balancing}
results in a contradiction regardless
of the choice of $r=v_k$, and the same holds if $\beta=D_2$. Thus
there are no further choices.

Finally, if $\beta\cdot D=2$, $r=0$, we fix $z\in Y$ general.
We now need to consider conics which meet both $D_1$ and $D_2$
transversally, pass through $z$, and are disjoint from $D_3$. 
There are a total of
$27$ conic bundles on $Y$: for $E$ the class of a line on $Y$, $|D-E|$
is a pencil of conics. Thus one easily checks that only eight of 
these have the correct intersection properties with $D$, 
precisely conics
of classes $D_3+L_{3j}$, $1\le j\le 8$. 
For each $j$, there is precisely one conic in the pencil
$|D_3+L_{3j}|$ passing through $z$. This is responsible for the last
term in the product $\vartheta_{v_1}\cdot\vartheta_{v_2}$.

\medskip

We now turn to the product $\vartheta_{v_1+v_2}\cdot \vartheta_{v_3}$.
As $v_1+v_2$ and $v_3$ do not lie in a common cone of $\Sigma$,
constant maps cannot occur. Thus we are faced with the 
possibilities $1\le \beta\cdot D\le 3$.

If $\beta\cdot D=1$, the same arguments as before reduce to
the possibilities that $\beta=D_1$, $D_2$ or $D_3$. 
First $\beta=D_3$ is impossible: any curve with contact order
at a point given by $v_1+v_2$ must pass through $D_1\cap D_2$.
However, in each of the other cases,
there is exactly one allowable map. For example, in case
$\beta=D_1$, we take $z\in D_1$ general, identify $C$ with $D_1$,
take $x_1$ to be the intersection point of $D_1$ and $D_2$,
$x_2$ the intersection point of $D_1$ and $D_3$, take $x_{\out}=z$,
and take $r=2v_1$.
Again it is possible to show that these curves exist as punctured
logarithmic curves, and $N^{D_i}_{v_1+v_2,v_3,2v_i}=1$ for $i=1,2$.
This gives the first two terms in the product.

If $\beta\cdot D=2$, then $r=v_i$ for some $i$,
and we must consider conics which pass through
$D_1\cap D_2$, are transversal to $D_3$, and pass through an additional
point $z\in D_i$. 
We may now show the image of any punctured map contributing
to $N^{\beta}_{v_1+v_2,v_3,v_i}$ is reducible. If the image is an
irreducible conic, that conic must pass through $D_1\cap D_2$, intersect
$D_3$ in at least one point, and pass through the generally
chosen point $z\in D_i$. This implies that $\beta\cdot (D_1+D_2+D_3)\ge 3$.
Since $D_1+D_2+D_3$ is the class of a hyperplane
section of the cubic surface, this contradicts $\beta$ being a degree $2$
class. If, on the other hand, the image of the punctured map is a line
(hence the punctured map is a double cover), this line must be
$D_1$ or $D_2$, being the only lines passing through $D_1\cap D_2$.
Thus $\beta = 2D_1$ or $2D_2$. However, this case is
ruled out via an application of \eqref{eq:balancing}. 

Thus necessarily the image of the punctured map is a union of two
lines. The only lines passing through $D_1\cap D_2$
are $D_1$ and $D_2$, and thus $\beta = D_i + L$ for $i=1$ or $2$ and
$L$ some other line. As the image of
$f$ must be connected, this only leaves the option of $\beta=D_1+L_{1j}$,
$\beta=D_2+L_{2j}$, or $\beta=D_j+D_k$. The third case
can be ruled out from \eqref{eq:balancing}, and for the first
two cases, one can show that $N^{\beta}_{v_1+v_2,v_3, v_i}
=1$. This gives the third and fourth terms
in the expression for $\vartheta_{v_1+v_2}\cdot\vartheta_{v_3}$.

Finally, we consider the case of $\beta\cdot D=3$, so that $\beta$
is a cubic. There are two choices. Either $\beta$ is the class
of a twisted cubic, i.e., $-2=\beta\cdot (\beta+K_Y)$, or $\beta$
is the class of an elliptic curve, i.e., $0=\beta\cdot (\beta+K_Y)$.
Now if $\beta$ is the class of a twisted cubic, it is easy to see
that the linear system $|\beta|$ is two-dimensional and induces
a morphism $\pi:Y\rightarrow Y'\cong \PP^2$. If in addition,
$\beta\cdot D_i=1$ for each $i$ (which follows from \eqref{eq:balancing}), 
$\pi$ maps
$D_1,D_2$ and $D_3$ to lines in $Y'$. Hence there is a one-to-one
correspondence between such classes $\beta$ and morphisms
$\pi:Y\rightarrow Y'$ as before.

Given such a morphism, $\beta=\pi^*H$, and there is a unique twisted
cubic in the linear system $|\beta|$ passing through both $z$
and $D_1\cap D_2$. Thus $N^{\beta}_{v_1+v_2, v_3,0}=1$. This gives
the fifth term in the expression for $\vartheta_{v_1+v_2}\cdot
\vartheta_{v_3}$.

Finally, if $\beta$ is the class of an elliptic curve of degree $3$,
it is necessarily planar, and hence $\beta=D$. We now
calculate $N^{\beta}_{v_1+v_2,v_3,0}$. First, there is a pencil
of plane cubics passing through $D_1\cap D_2$ and $z$. If
$\ell\subseteq\PP^3$ denotes the line joining these points, then each 
element of the pencil is of the form
$H\cap Y$ for $H\subseteq \PP^3$ a plane containing $\ell$.
To study this pencil, we may blow-up its basepoints, which are the
three points of $\ell \cap Y$. This gives a rational elliptic surface
$g:\tilde Y\rightarrow \PP^1$. Via a standard
Euler characteristic computation, such a surface is expected to have $12$
singular fibres. However, note that if $H$ contains $D_i$, $i=1$
or $2$, then $H\cap Y$ is a union $D_i\cup C$ of a line and a conic. 
In general, $C$ intersects $D_i$ in two points. By normalizing one
of these two nodes, we obtain a stable map to $Y$. However, none of
these maps can be equipped with the structure of a stable log map
because the point of normalization on the conic maps into $D$ and
has non-zero contact order with $D$, yet it is not a marked point.

Since we have just seen that two of the fibres of this elliptic fibration
are of Kodaira type $I_2$, 
this leaves $8$ additional nodal elliptic curves. By normalizing
the node, one obtains a genus zero stable map with the desired
intersection behaviour with $D$. This yields the last term
in the description of $\vartheta_{v_1+v_2}\cdot \vartheta_{v_3}$.

\bigskip

We close by noting that the Frobenius structure conjecture
(see the first arXiv version of \cite{GHK11}, Conjecture 0.9, or \cite{M19}
and \cite{KY19}) 
gives us another explanation for the constant term (i.e., coefficient
of $\vartheta_0$,)
$\sum_{\pi} z^{\pi^*H} + 10 z^{D_1+D_2+D_3}$
in the equation defining the mirror to the cubic surface.
Here we write $10$ rather than $4$ as we rewrite the equation
for the mirror in terms of $\vartheta_{2v_i}$ instead of
$\vartheta_{v_i}^2$.

Indeed, the Frobenius conjecture implies
that we may calculate the constant term in the triple product
$\vartheta_{v_1}\vartheta_{v_2}\vartheta_{v_3}$ as
$\sum_{\beta} N^{\beta}_{v_1v_2v_30}z^{\beta}$ where, roughly,
$N^{\beta}_{v_1v_2v_30}$ is a count defined as follows. 
Fix $z\in Y$ general and
$\lambda\in\overline{\shM}_{0,4}$.
Then we count four-pointed
stable log maps $f:(C,x_1,x_2,x_3,x_{\out})\rightarrow Y$ such that
$f$ meets $D_i$ transversally at $x_i$, $f(x_{\out})=z$,
and the modulus of the stabilization of $C$
is $\lambda$. This can be viewed as fixing the cross-ratio of the four points
$x_1,x_2,x_3,x_{\out}$ to be $\lambda$. This part of the Frobenius conjecture
is shown in \cite{GS19} and \cite{KY19}, and see also \cite{M19} for
related results.

The class $\beta$ of such a curve $C$ must satisfy $\beta\cdot D=3$,
so $\beta$ is either a twisted cubic or a plane cubic. In the former
case, one immediately recovers $\sum_{\pi} z^{\pi^*H}$. Indeed,
if one fixes $z\in \PP^2$ and a cross-ratio $\lambda$,
there is a unique line $H$ in $\PP^2$ passing through $z$ 
such that the cross-ratio of $z$ and
the three points of intersection of $H$ with the boundary divisor is
$\lambda$. 

The count of plane cubics is more subtle. In this case, it is easiest
to fix the modulus of the stabilization of $C$ by insisting the
stabilization is a singular curve, with $x_2,x_3$ on one irreducible
component and $x_1,x_{\out}$ on the other. 
There are the following possibilities.
\begin{enumerate}
\item The image of $f$ is a union of three lines. This cannot occur,
as such a curve does not pass through a general $z\in Y$.
\item The image of $f$ is the union of a line and a conic, $E\cup Q$. 
Suppose $E\not=D_i$ for any $i$. Then $E$ meets $D$ at one point and is
rigid, hence does not pass through $z$. Thus three of the four 
marked points of $C$ must lie in $Q$. This contradicts the choice of
modulus. Thus $E=D_i$ for some $i$, and $Q\in |D-D_i|$. In particular,
as $Q$ is irreducible, $Q$ is disjoint from $D_j, D_k$ for $\{i,j,k\}=
\{1,2,3\}$. Thus $D_i$ must contain those marked points mapping to
$D_j$ and $D_k$, so necessarily $D_i=D_1$. In particular, $C$ is
the normalization of $D_1\cup Q$ at one of the two nodes, and the marked
point $x_1$ is the point of $Q$ mapping to the chosen node.
Note that this marking is what allows us to count this curve, as opposed
to the same curve considered in the contribution to the constant term
in $\vartheta_{v_1+v_2}\cdot \vartheta_{v_3}$. Because of the choice
of nodes, this gives two curves of class $D$.
\item The image of $f$ is an irreducible nodal cubic. In order for the
domain to have the given modulus, $x_2$ and $x_3$ must lie on a contracted
component of $C$, i.e., $C=C_1\cup C_2$ with $x_2,x_3\in C_1$, $x_1,
x_{\out}\in C_2$, $f|_{C_1}$ constant with image $D_2\cap D_3$,
and $f(C_2)$ a nodal cubic. The count is now exactly the same as
in the case of the contribution of nodal cubics to $\vartheta_{v_1+v_2}
\cdot \vartheta_{v_3}$, and we have $8$ such nodal cubics.
\end{enumerate}
This explains the term $10z^{D_1+D_2+D_3}$.

\end{document}